\theoremstyle{plain}
\newtheorem{theorem}{Theorem}[section]
\newtheorem{lemma}[theorem]{Lemma}
\newtheorem{proposition}[theorem]{Proposition}
\newtheorem{fact}[theorem]{Fact}
\newtheorem*{theorem*}{Theorem}
\newcounter{MainCorollaryCounter}
\newcounter{MainTheoremCounter}
\newtheorem{MainTheorem}[MainTheoremCounter]{Theorem}
\theoremstyle{definition}
\newtheorem{definition}[theorem]{Definition}
\newtheorem*{setup}{Setup}
\newtheorem{example}[theorem]{Example}
\newcounter{MainConjectureCounter}
\newtheorem{MainConjecture}[MainConjectureCounter]{Conjecture}
\newcounter{MainPrimeConjectureCounter}
\newtheorem{MainPrimeConjecture}[MainPrimeConjectureCounter]{Conjecture}
\newtheorem{remark}[theorem]{Remark}
\newcommand{\textdef}[1]{\textit{#1}}
\newcommand{\barc}{\bar{c}}
\newcommand{\barh}{\bar{h}}
\newcommand{\barm}{\bar{m}}
\newcommand{\bary}{\bar{y}}
\newcommand{\bfone}{\mathbf{1}}
\newcommand{\conjunder}[1]{\sim_{#1}}
\DeclareMathOperator{\sym}{Sym} 
\DeclareMathOperator{\alt}{Alt} 
\DeclareMathOperator{\aut}{Aut}
\DeclareMathOperator{\inn}{Inn}
\DeclareMathOperator{\soc}{soc} 
\DeclareMathOperator{\wreath}{wr} 
\renewcommand{\wr}{\wreath}
\begin{document}
\title{A reduction theorem for primitive binary permutation groups}
\author{Joshua Wiscons}
\address{Department of Mathematics\\
Hamilton College\\
Clinton, NY 13323, USA}
\email{jwiscons@hamilton.edu}
\thanks{This material is based upon work supported by the National Science Foundation under grant no. OISE-1064446.}
\keywords{primitive permutation groups, relational complexity}
\subjclass[2010]{Primary 20B15; Secondary 03C13}
\begin{abstract}
A permutation group $(X,G)$ is said to be binary, or of relational complexity $2$, if for all $n$, the orbits of $G$ (acting diagonally) on $X^2$ determine the orbits of $G$ on $X^n$ in the following sense: for all $\bar{x},\bar{y} \in X^n$,  $\bar{x}$ and $\bar{y}$ are $G$-conjugate if and only if every pair of entries from $\bar{x}$ is $G$-conjugate to the corresponding pair from $\bar{y}$.  Cherlin has conjectured that the only finite primitive binary permutation groups are $S_n$, groups of prime order, and affine orthogonal groups $V\rtimes O(V)$ where $V$ is a vector space equipped with an anisotropic quadratic form; recently he succeeded in establishing the conjecture for those groups with an abelian socle. In this note, we show that what remains of the conjecture reduces, via the O'Nan-Scott Theorem, to groups with a nonabelian simple socle. 
\end{abstract}
\maketitle

\section{Introduction}
This note focuses on binary permutation groups; that is, permutation groups of relational complexity $2$. The relational complexity of a group $G$ acting on a set $X$ is the smallest $k$ (if one exists) for which the orbits of $G$ on $X^k$ ``determine'' the orbits of $G$ on $X^n$ for all $n$. The precise meaning of ``determine'' will be made clear below, but whatever it is, one may rightly believe that being binary is a rather restrictive hypothesis. Here, we address the following conjecture of Cherlin, see \cite[\S3]{ChG00} and \cite[Conjecture~1]{ChG13}.

\begin{MainConjecture}\label{conj.A}
A finite \emph{primitive} binary permutation group is either $S_n$ acting naturally on $\{1,\ldots,n\}$, a cyclic group of prime order acting regularly, or an affine orthogonal group $V\rtimes O(V)$ where $V$ is a vector space equipped with an anisotropic quadratic form and $O(V)$ is the full orthogonal group.
\end{MainConjecture}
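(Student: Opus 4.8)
The plan is to prove the reduction implicit in the abstract: granting Cherlin's theorem for primitive binary groups with abelian socle, and granting the conjecture for those with a nonabelian \emph{simple} socle, I would show that \emph{no} primitive binary group has any other socle type, so that these two cases exhaust the possibilities. The organising principle is the O'Nan--Scott Theorem, which attaches to a finite primitive group $(X,G)$ its socle $\soc(G)\cong T^k$ for a simple group $T$ and sorts $G$ into the affine type ($T$ abelian), the almost simple type ($T$ nonabelian, $k=1$), and the remaining ``product-like'' types --- simple and compound diagonal, product action, twisted wreath, and the two holomorph types --- all of which have $T$ nonabelian and $k\ge 2$. The affine type is settled by Cherlin and the almost simple type is the hypothesis, so the entire content is to show that a primitive group with $\soc(G)\cong T^k$, $T$ nonabelian simple and $k\ge 2$, is never binary.

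The main tool I would use is the hereditary behaviour of relational complexity: passing from $(X,G)$ to the permutation group induced by a set-stabiliser on an invariant subset, or to a suitable section of $G$, cannot increase relational complexity, so any such section of a binary group is again binary. The strategy is then, for each offending O'Nan--Scott type, to exhibit a canonical section that is a recognisably \emph{non}-binary permutation group, deriving a contradiction. For the diagonal types the natural section is the two-sided action of $T\times T$ on $T$ given by $(t_1,t_2)\colon x\mapsto t_1 x t_2^{-1}$, extended by $\aut(T)$ and the coordinate swap; here the $G$-orbit of a pair $(x,y)$ is governed only by the conjugacy class of $x^{-1}y$ (up to automorphisms of $T$ and inversion), so binarity would force the triple of class-invariants $(\mathrm{cl}(y),\mathrm{cl}(z),\mathrm{cl}(y^{-1}z))$ to determine $(y,z)$ up to simultaneous conjugacy, and I would show that for nonabelian simple $T$ this ``conjugacy geometry'' fails, producing tuples that are $2$-equivalent but not $G$-conjugate. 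The twisted wreath and holomorph types reduce, via their regular normal subgroups and the associated diagonal sections, to the same kind of analysis.

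The main obstacle I expect is the product action type, where $G\le \sym(\Delta)\wr\sym(k)$ acts on $\Delta^k$ and the component group on $\Delta$ may itself be an almost simple \emph{binary} group. One cannot appeal to a non-binary component; instead the failure of binarity must come entirely from the wreathing, and the delicate point is that $2$-equivalence in $\Delta^k$ constrains only the behaviour of pairs of coordinates, while the coordinate-permuting top group allows global rearrangements invisible at that level. Concretely, I would try to build two tuples in $\Delta^k$ whose entries agree coordinatewise in matching patterns --- so that they are $2$-equivalent --- yet which lie in distinct $G$-orbits, exploiting the Hamming-type structure of $\Delta^k$ (already for $k=2$ and small $|\Delta|$ this is the rook's-graph obstruction to binarity). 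Making this witness uniform across all component groups, and then bootstrapping from $k=2$ to general $k$ and to the compound diagonal case via the inheritance lemma, is where the real work lies; the remaining types should then fall out by the same machinery.
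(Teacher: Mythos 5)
First, a framing point: the statement you set out to prove is Conjecture~\ref{conj.A}, which the paper itself does \emph{not} prove --- it remains open there. What the paper proves is Theorem~\ref{thm.A}, an unconditional reduction, which together with Cherlin's affine theorem shows that Conjecture~\ref{conj.A} follows from Conjecture~\ref{conj.Aprime}. Since your proposal explicitly grants both the affine case and the nonabelian simple socle case, what you are really proposing is that same reduction, organized the same way (O'Nan--Scott, then elimination of the diagonal, twisted wreath, and product types). Judged as such, it has two genuine gaps.

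The first gap is your ``main tool.'' It is not true in general that passing to a section, or to the group induced by a setwise stabilizer on an invariant subset, cannot increase relational complexity. Complexity is not monotone under subgroups ($S_n$ in its natural action is binary, while $A_n$ in the same action has complexity $n-1$), and binarity of $(X,G)$ does not pass to set-stabilizer actions: tuples from $Y\subseteq X$ that are $2$-equivalent under the induced group are $G$-conjugate, but the conjugating element of $G$ need not stabilize $Y$. The paper's inheritance lemma for product type is proved by a padding device --- constant columns --- whose entire purpose is to \emph{force} the conjugating element into $W_1\cap G$; such a forcing device must be built separately for every ``section'' you invoke, and you supply none for the diagonal or twisted wreath reductions. (The paper, in fact, does not reduce those types to two-factor sections at all: diagonal type is handled for all $k$ at once by showing that binarity would yield an automorphism of $T$ inverting an entire conjugacy class, which must then be central and hence trivial; the twisted wreath case uses Feit--Thompson and the faithfulness of the point stabilizer on the simple factors.)

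The second, and fatal, gap is product type. You insist that ``the failure of binarity must come entirely from the wreathing'' and propose a Hamming-type witness uniform over all component groups. But $2$-equivalence of such witness tuples has to be certified by elements of $G$ moving \emph{pairs} of rows, and that requires transitivity properties of the component: the paper's matrices $A$ and $B$ are shown to be $2$-equivalent only after the component has been forced to equal $\sym(Y)$, so that its socle $\alt(Y)$ is $2$-transitive and $G$ realizes $P$ on pairs. Even your motivating example cooperates only for this reason --- the rook's graph has automorphism group $S_m\wr S_2$, with full symmetric components. For a hypothetical component that is binary, almost simple, and \emph{not} $2$-transitive, no such witness is known; the impossibility of producing one directly is exactly why Theorem~\ref{thm.A} must retain its third case rather than eliminating product type outright. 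This is not a detail to be filled in; it is the part of the problem that remained open. Under your own grants the case closes differently and easily: the inheritance lemma makes the component binary; Proposition~\ref{prop.Diagonal} excludes diagonal components, so the component is almost simple; Conjecture~\ref{conj.Aprime} then forces it to be $S_n$ in its natural, hence $2$-transitive, action; and only at that point does the explicit $4\times k$ witness of Proposition~\ref{prop.Product} apply and give the contradiction. Your refusal to appeal to the component's classification is therefore both unnecessary and the precise reason the plan, as written, cannot be completed.
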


Relational complexity has its roots in Lachlan's classification theory for finite homogeneous structures (see \cite{LaA84} or \cite{KnLa87}); however, little was known about relational complexity  in specific cases until the work of Cherlin, Martin, and Saracino in \cite{CMS96}, which was followed-up by Saracino's remarkable and detailed analysis in  \cite{SaD99} and \cite{SaD00}. More examples were laid out in \cite{ChG00} where Conjecture~\ref{conj.A} also took form. The recent work \cite{ChG13} of Cherlin establishes the conjecture for groups with an abelian socle, i.e. \textdef{affine groups}, and here, using the O'Nan-Scott Theorem, we reduce what remains to the case of groups with a nonabelian simple socle, i.e. \textdef{almost simple groups}. Specifically, we prove the following.

\begin{MainTheorem}\label{thm.A}
If $G$ is a finite primitive binary permutation group, then either
\begin{enumerate}
\item $G$ is affine,
\item $G$ is almost simple, or
\item $G$ is a subgroup of a wreath product $H\wr S_m$ in its product action where $H$ is a primitive binary almost simple permutation group that is not $2$-transitive.
\end{enumerate}
\end{MainTheorem}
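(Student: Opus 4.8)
The plan is to invoke the O'Nan-Scott Theorem to stratify the possible types of a finite primitive group $G$, and then to eliminate all types except affine, almost simple, and the product-action type described in case (3). The O'Nan-Scott classification partitions primitive groups into (roughly): affine (abelian socle), almost simple (nonabelian simple socle), and several types with a nonabelian socle that is a direct product $T^m$ of $m \geq 2$ copies of a nonabelian simple group $T$ — namely simple diagonal, product, and twisted wreath types. My goal is to show that the binary hypothesis forces any $G$ of one of these nonabelian-nonsimple types either to be ruled out entirely or to fall into case (3). The backbone of every elimination will be a single principle: being binary is inherited by point stabilizers and, more importantly, passes to certain naturally associated sections and subquotient actions; so if $G$ is binary, then the induced actions I can extract from its O'Nan-Scott structure must themselves be binary, and I can derive a contradiction (or the desired product-action structure) from the known obstructions to binarity.

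\begin{proof}[Proof sketch]
First I would record the enabling lemmas about binarity that I expect to have proved earlier (or would prove now): that binarity is preserved under passage to a point stabilizer's action and, crucially, that if $(X,G)$ is binary and $G$ preserves a suitable product or quotient structure, then the induced component action is binary as well. The key quantitative input is that $2$-transitive groups tend to \emph{fail} binarity except in the trivial $S_n$ case: Cherlin's earlier analysis shows, for instance, that a $2$-transitive binary group is very restricted. This is why case (3) must exclude $2$-transitive $H$ — a product action $H \wr S_m$ with $H$ genuinely $2$-transitive would, via the diagonal, reproduce a forbidden $2$-transitive configuration, so the binary hypothesis both rules that out and pins down the non-$2$-transitive requirement.

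Next I would run through the O'Nan-Scott types with nonabelian socle $T^m$, $m \geq 2$. For the \emph{diagonal-type} groups (simple diagonal action, HS/SD/CD), the point stabilizer contains a diagonal copy of $T$ and the action on $X^2$ retains enough of the structure of $T^m$ acting on cosets that one can exhibit a tuple in $X^3$ or $X^4$ whose $G$-orbit is not determined by its pairs; the standard witness is built from triples of points whose pairwise stabilizer data coincide but whose global $T$-twisting differs. For \emph{twisted wreath type} (TW), I would argue similarly, or reduce to the diagonal/product cases by analyzing the base group, since the regular nonabelian socle again produces non-binary $4$-tuples. For \emph{product action} (PA), the group $G$ embeds in $H \wr S_m$ acting on $\Delta^m$ where $H$ is primitive of almost simple (or diagonal) type on $\Delta$; here binarity of $G$ forces binarity of the component $H$ on $\Delta$, so $H$ must itself be a primitive binary almost simple group, and the exclusion of $2$-transitive $H$ follows as above, landing us precisely in case (3). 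The residual PA subcase where the components are of diagonal type would be eliminated by folding it into the diagonal analysis.

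The main obstacle I anticipate is the diagonal and twisted-wreath elimination: constructing explicit non-binary tuples requires carefully tracking how the socle $T^m$ and the top group permuting the factors act on the coset space, and showing that the pairwise orbit invariants genuinely fail to separate two non-conjugate $n$-tuples. The cleanest route is probably to reduce to a minimal configuration — exploiting that binarity is inherited by the action of the socle and of small factor-subgroups — so that the failure of binarity for a \emph{single} nonabelian simple $T$ acting diagonally (a statement one can verify directly, perhaps already available from Cherlin's work) propagates to all $m \geq 2$. Once that core obstruction is isolated, the remaining types are dispatched by structural reductions rather than fresh computation, and assembling these eliminations against the O'Nan-Scott list yields exactly the trichotomy (1)--(3).
\end{proof}
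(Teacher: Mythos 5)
Your high-level skeleton does match the paper's: run through the O'Nan-Scott types, use a component-inheritance lemma to handle product type, and eliminate the diagonal and twisted wreath (regular nonabelian socle) types outright. But the proposal has genuine gaps exactly where the paper's work lies. The most concrete error is your exclusion of $2$-transitive components in case (3): you argue that a $2$-transitive $H$ would ``reproduce a forbidden $2$-transitive configuration,'' but $2$-transitive binary groups are not forbidden --- $\sym(Y)$ itself is binary and $2$-transitive, and it is the \emph{only} $2$-transitive binary group. So inheritance of binarity to the component only tells you that a binary product-type $G$ would have component $G_1=\sym(Y)$; to finish, one needs a separate argument that a subgroup of $\sym(Y)\wr S_k$ in product action with full socle $\alt(Y)^k$ is never binary. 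The paper does this with an explicit pair of $4\times k$ matrices that are $2$-equivalent (using columnwise $2$-transitivity of $\alt(Y)^k$ and transitivity of the top group) yet cannot be $G$-conjugate because their column patterns differ. Nothing in your sketch produces this counterexample or a substitute for it; your stated reasoning is circular.

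Second, your eliminations of diagonal and twisted wreath types are placeholders rather than arguments. For diagonal type there is no ready-made ``standard witness'' triple or quadruple one can just write down; the paper instead uses binarity \emph{positively}: it shows the point stabilizer must contain an element simultaneously inverting every member of a conjugacy class $r^T$ (with $r$ a product of two noncommuting involutions), and then that the corresponding automorphism of $T$ is central, hence trivial, contradicting that it inverts $r\neq r^{-1}$. For twisted wreath type the paper needs the Feit--Thompson theorem (to find an involution in $N_H(T_1)/C_H(T_1)$), a lemma that an order-$2$ automorphism of a nonabelian simple group inverts some element it does not centralize, and the faithfulness of the point stabilizer's action on the set of socle factors. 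Your alternative route --- reduce to a ``minimal configuration'' so that failure of binarity for a single diagonal $T$ propagates to all $m\ge 2$ --- does not work as stated, because binarity is not monotone under passing to subgroups or overgroups in the direction you would need, and no such reduction appears in the paper; the general $k$ is handled directly. In short, the proposal is a correct outline of the strategy, but the substantive content of all three hard steps (diagonal elimination, twisted wreath elimination, and the $2$-transitivity exclusion) is missing or based on incorrect reasoning.
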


In light of Cherlin's solution for groups of affine type, Theorem~\ref{thm.A} reduces Conjecture~\ref{conj.A} to the following conjecture specific to the almost simple case. 

\begin{MainPrimeConjecture}\label{conj.Aprime}
The only finite primitive binary permutation group with a nonabelian simple socle is $S_n$ in its natural action on $\{1,\ldots,n\}$.
\end{MainPrimeConjecture}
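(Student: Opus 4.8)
The plan is to isolate the genuinely hard part and dispatch the rest by an elementary observation. Recall that $(X,G)$ fails to be binary precisely when there is some $n$ and a pair of tuples $\bar a,\bar b\in X^n$ that are \emph{$2$-equivalent}, meaning $(a_i,a_j)\sim_G(b_i,b_j)$ for all $i,j$, yet $\bar a\not\sim_G\bar b$; I will call such a pair a \emph{witness}. The first step is a transitivity-boosting lemma, whose proof is immediate: if $G$ is $k$-transitive with $k\ge 2$ and binary, then $G$ is $(k+1)$-transitive. Indeed, if $G$ is $k$-transitive but not $(k+1)$-transitive, pick $(k+1)$-tuples $\bar a,\bar b$ of distinct points lying in different $G$-orbits; since $G$ is $2$-transitive every off-diagonal pair lies in the unique nontrivial orbital, so $\bar a$ and $\bar b$ are $2$-equivalent and hence form a witness. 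Iterating from $k=2$ up to $k=|X|$ shows that a binary $2$-transitive group on $n$ points is $n$-transitive, i.e. equals $S_n$ in its natural action; this simultaneously confirms the positive half of the statement, since for $S_n$ natural the $2$-type of a tuple records only its equality pattern, which determines the $S_n$-orbit. The lemma also disposes of the alternating socle at once: for $n\ge 5$ the natural action of $A_n$ is $2$-transitive but not $(n-1)$-transitive, hence non-binary, and more generally every primitive $2$-transitive almost simple group other than $S_n$ natural is non-binary with no further case analysis. Conjecture~\ref{conj.Aprime} thus reduces to showing that \emph{no finite primitive almost simple group of rank at least $3$ is binary}.

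For the rank-$\ge 3$ case I would invoke the classification of finite simple groups together with the Aschbacher-style description of maximal subgroups to organize the primitive almost simple actions of rank $\ge 3$ by socle type (alternating, classical, exceptional, sporadic) and by the geometric class of the point stabilizer. The uniform mechanism for producing witnesses is the failure of \emph{pairwise incidence data to determine a global configuration}. The prototype lives in the action of $S_n$ on $2$-subsets ($n\ge 4$): the triangle $K_3$ and the star $K_{1,3}$ are triples of edges in which every pair meets in exactly one vertex, so they are $2$-equivalent, yet they are not $S_n$-conjugate because they cover a different number of vertices. This has a projective avatar for classical groups acting on subspaces: three pairwise-intersecting lines that are concurrent and non-coplanar versus three that form a triangle in a plane have identical pairwise intersection dimensions, hence the same $2$-type, but lie in different orbits. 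The plan for the rank-$\ge 3$ case is to build, for each family, such a ``concurrent versus triangle'' witness from the natural module or the relevant polar/unitary geometry, choosing the ambient rank large enough that the two local pictures glue in two genuinely inequivalent global ways.

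The main obstacle is exactly this last point: producing these witnesses \emph{uniformly} across the infinite families. For the classical groups the orbitals on subspaces are indexed by refined invariants (dimensions of intersections together with the type of the induced form), so one must verify that a chosen pair of configurations really shares every pairwise invariant while differing globally, and one must separately treat the small-rank degeneracies where the configurations collapse. The exceptional groups of Lie type, where no convenient subspace geometry is at hand, are likely the hardest and may need to be approached through a larger linear or classical action, while the finitely many sporadic groups and the low-rank exceptions should be settled by direct computation on their primitive representations. Establishing a single geometric lower bound on relational complexity---something guaranteeing that rank $\ge 3$ forces relational complexity $\ge 3$ outside a short explicit list---would be the cleanest route, but I expect the bulk of the work to lie in the case-by-case geometric constructions rather than in any single unifying estimate.
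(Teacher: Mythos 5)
You should first note what this statement is: Conjecture~\ref{conj.Aprime} is a \emph{conjecture}, not a theorem of the paper. The paper's contribution (Theorem~\ref{thm.A}) is the O'Nan--Scott reduction of Conjecture~\ref{conj.A} to this almost simple case; no proof of the conjecture itself appears in the paper, and none was known when it was written. So there is no internal proof to compare against, and your attempt must stand on its own. Its first half does stand: the observation that a binary $2$-transitive group on $n$ points is $n$-transitive, hence all of $\sym(X)$, is correct and elementary --- in a $2$-transitive group any two tuples of pairwise distinct points are automatically $2$-equivalent, so binarity forces them to be conjugate. (In fact you need no iteration: $2$-transitivity alone gives $2$-equivalence of all such tuples of every length.) This is exactly the fact the paper records in Example~\ref{exam.Sn}, and it is used in contrapositive form in the proof of Proposition~\ref{prop.Product}, where binary plus $2$-transitive forces $G_1=\sym(Y)$. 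Your reduction of the conjecture to the claim that no primitive almost simple group of rank at least $3$ is binary is therefore valid, and it correctly disposes of $A_n$ in its natural action and of all other $2$-transitive almost simple groups at once.

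However, that reduction is the easy step, and everything after it is a program rather than a proof --- as you candidly acknowledge. Your two witnesses are sound as far as they go: the triangle-versus-star triples of edges show $S_n$ on $2$-subsets ($n\ge 4$) is not binary, and the concurrent-versus-coplanar triples of lines handle certain subspace actions of classical groups. But these cover only a sliver of the primitive almost simple actions of rank at least $3$: nothing is constructed for the non-subspace Aschbacher classes of point stabilizers (imprimitive, field-extension, tensor, normalizers of classical subgroups, almost simple maximal subgroups), nor for exceptional groups of Lie type, nor for sporadic groups, and no uniform lower bound of the kind you hope for (``rank $\ge 3$ forces relational complexity $\ge 3$ outside a short list'') is established anywhere in the proposal. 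Carrying this out genuinely requires a CFSG-driven analysis of all maximal subgroups of almost simple groups; this is precisely the open problem the paper isolates, and it was resolved only years later (by Gill, Liebeck, and Spiga) along roughly the lines you sketch, with substantial case-by-case work and machine computation for the sporadic and small cases. In short: the $2$-transitive half and the reduction are correct, but the core of Conjecture~\ref{conj.Aprime} remains unproved in your proposal.
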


It is worth emphasizing that one should be able to say something interesting under a more general $k$-ary hypothesis, but it is not clear, at present, what this might be. 
Our arguments do not seem to immediately generalize beyond the binary case, so we have avoided the general setting, for now.  

\section{Preliminaries}

We first fix some notation and conventions that will be used throughout the article. From this point on, \emph{all groups are finite}. A permutation group is just a group with a fixed, faithful action on some set.  To emphasize both the group and the set, we may call $(X,G)$ a permutation group to mean that $G$ is a group acting faithfully on the set $X$.  We will often consider the induced (diagonal) action of $G$ on a power $X^n$; the orbits of $G$ on $X^n$ will be called \textdef{$n$-types}. For $\bar{x},\bar{y} \in X^n$ and $H\le G$, we say that $\bar{x}$ and $\bar{y}$ are \textdef{$H$-conjugate}, denoted $\bar{x}\conjunder{H} \bar{y}, $ if they lie in the same $H$-orbit. We denote the pointwise stabilizer of a subset $Y\subseteq X$ by $G_Y$. Unless otherwise stated, groups will always act on the right.

\subsection{Primitive groups}

A permutation group $(X,G)$ is \textdef{primitive} if $X$ has no proper nontrivial $G$-invariant equivalence relations. This is equivalent to $G$ acting transitively with all point stabilizers being maximal proper subgroups of $G$.  Our main reference for primitive groups will be \cite{DiMo96}.

The analysis of primitive groups may be broken down according to the structure of the \textdef{socle} of the group, i.e. the subgroup generated by all minimal normal subgroups. We will denote the socle of $G$ by $\soc(G)$, and if $G$ is a primitive group, $\soc(G)$ will be a direct product of isomorphic simple groups. We now give a very rough statement of the O'Nan-Scott Theorem; one may see \cite[Section~4.8]{DiMo96} for more details. We will elaborate on the various types as they arise in our analysis.

\begin{fact}\label{fact.ONS}
Let $(X,G)$ be a finite primitive permutation group. Set $M:=\soc(G)$, and write $M=T^k$ for some simple group $T$. Then $(X,G)$ is of one of the following types.
\begin{description}
\item[Affine] $M$ is abelian and acts regularly.
\item[Regular nonabelian] $M$ is nonabelian and acts regularly.
\item[Almost simple] $M$ is nonabelian simple and does not act regularly.
\item[Diagonal] $X$ may be identified with $T^k$ modulo the equivalence relation given by the orbits of $T$ acting diagonally on $T^k$ by left multiplication (by the inverse); $M$ acts coordinatewise by right multiplication.
\item[Product]  $G$ is a subgroup of a wreath product $H\wr S_m$ in its product action where $H$ is primitive of almost simple or diagonal type and $M=\left(\soc(H)\right)^m$.
\end{description}
\end{fact}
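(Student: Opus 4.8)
The plan is to prove this structural dichotomy—which is the O'Nan--Scott Theorem, here recorded as a \emph{fact} with the detailed proof deferred to \cite[Section~4.8]{DiMo96}—by a systematic analysis of the minimal normal subgroups of a primitive group, organized around the socle $M=\soc(G)$. The starting point is a short list of standard facts about normal subgroups of a primitive $(X,G)$: a nontrivial normal subgroup is transitive (otherwise its orbits would form a nontrivial $G$-invariant partition of $X$, contradicting primitivity), and the centralizer of a transitive subgroup acts semiregularly. Combining these, one shows that a minimal normal subgroup $N\le G$ is characteristically simple, hence isomorphic to $T^k$ for a simple group $T$, and that $G$ permutes the $k$ simple factors $T_1,\dots,T_k$ transitively by conjugation. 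A further consequence is that $G$ has at most two minimal normal subgroups, and that if it has exactly two, say $N_1$ and $N_2$, they are nonabelian, isomorphic, regular, and mutually centralizing with $C_G(N_i)=N_{3-i}$. Thus $M$ is either a single minimal normal subgroup $T^k$ or, in the two-factor regular situation, a product $T^k\times T^k$.

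With this skeleton in place I would first dispose of the abelian case. If $T$ is abelian then $T\cong C_p$, so $M$ is elementary abelian; being transitive and abelian it is regular and self-centralizing, $C_G(M)=M$. Fixing $\alpha\in X$ then gives $G=M\rtimes G_\alpha$ with $G_\alpha\le\aut(M)$, which is precisely the \textbf{affine} type. For the remainder I would assume $T$ is nonabelian simple. If $M$ acts regularly we are in the \textbf{regular nonabelian} type and nothing further need be extracted at this level of detail. So suppose $M$ is nonregular and fix $\alpha\in X$. The crucial object is the point stabilizer $M_\alpha=M\cap G_\alpha$, which is nontrivial precisely because $M$ is nonregular. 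Writing $\pi_i\colon M\to T_i$ for the coordinate projections, the combinatorial heart of the argument is a Goursat/Scott-type analysis of subgroups of $T^k$ with $T$ simple: every subdirect subgroup is a direct product of full diagonal subgroups over a partition of the index set. Applying this to $M_\alpha$, and using the maximality of $G_\alpha$ together with the transitive $G$-action on the factors, pins down $M_\alpha$ into the relevant cases.

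When $k=1$ this immediately yields the \textbf{almost simple} type. When $k\ge 2$, the dichotomy is governed by whether $M_\alpha$ is a full diagonal subgroup of $M$ (equivalently, $M_\alpha\cong T$ embedded diagonally). In that case one recovers the identification of $X$ with $T^k$ modulo the diagonal $T$-action, with $M$ acting coordinatewise, which is exactly the \textbf{diagonal} type. Otherwise $M_\alpha$ decomposes along a nontrivial $G$-invariant partition of $\{T_1,\dots,T_k\}$ into $m$ blocks; blocking the factors accordingly exhibits $X$ as a Cartesian power and embeds $G$ into a wreath product $H\wr S_m$ in its product action, with $H$ primitive of almost simple or diagonal type and $M=(\soc(H))^m$—the \textbf{product} type.

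I expect the main obstacle to be the interplay between the subgroup-theoretic structure of $M_\alpha$ and the maximality of $G_\alpha$: one must simultaneously use maximality to constrain $M_\alpha$ and to certify the \emph{primitivity} of the component action that underlies the diagonal/product split, all while carefully tracking how $G$ itself (not merely $M$) sits over the permutation action on the factors $T_1,\dots,T_k$. This bookkeeping—keeping the $G$-action on blocks compatible with the product structure and verifying that the constituent $H$ is again primitive—is where the genuine work and the finest case distinctions reside. A reassuring point is that the structural dichotomy itself requires no appeal to the classification of finite simple groups; only the eventual enumeration of which groups actually occur in each type would.
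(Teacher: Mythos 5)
The first thing to note is that the paper offers no proof of this statement to compare against: it is recorded as a \emph{Fact} precisely because it is the O'Nan--Scott Theorem, quoted with a pointer to \cite[Section~4.8]{DiMo96}, and nothing in the paper depends on reproving it. What you have written is therefore an outline of the standard proof of O'Nan--Scott itself, and in broad strokes it follows the usual route (Dixon--Mortimer, Liebeck--Praeger--Saxl, Scott): transitivity of nontrivial normal subgroups of a primitive group, semiregularity of centralizers of transitive subgroups, the structure of the socle and the transitive conjugation action on its simple factors, the fact that there are at most two minimal normal subgroups, and then a case division on the point stabilizer $M_\alpha$. Your treatment of the affine, regular nonabelian, and almost simple ($k=1$) cases is correct and standard.

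The genuine gap is in the step where you ``apply'' the Goursat/Scott analysis to $M_\alpha$. Scott's lemma classifies \emph{subdirect} subgroups of $T_1\times\cdots\times T_k$ --- those projecting onto every factor --- as products of full diagonal subgroups over a partition of the index set. But $M_\alpha$ need not be subdirect, and the case where it is not is exactly the product type with almost simple component: for $G=S_5\wr S_2$ in its product action on $25$ points, $M_\alpha\cong A_4\times A_4$ projects onto proper subgroups of both factors of $\soc(G)=A_5\times A_5$. So your ``otherwise $M_\alpha$ decomposes along a nontrivial $G$-invariant partition'' is not a consequence of the lemma you invoke; it needs a separate argument, for instance: set $R_i:=\pi_i(M_\alpha)$ and $K:=\prod_i R_i$; then $G_\alpha$ normalizes $K$ (it permutes the factors and normalizes $M_\alpha$), so $G_\alpha K$ is a subgroup, and $G_\alpha K=G$ is impossible since Dedekind's law would give $M=M\cap G_\alpha K=M_\alpha K=K$, contradicting $R_i<T_i$; maximality of $G_\alpha$ then forces $K\le G_\alpha$, i.e. $M_\alpha=K=\prod_i R_i$. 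Even granting this, the remaining work --- building the Cartesian decomposition of $X$ from the block structure, embedding $G$ into $H\wr S_m$ in product action, and certifying that the component $H$ is primitive with $\soc(H)$ as claimed --- is exactly what you defer as ``where the genuine work resides.'' So the proposal is a correct plan with correct standard reductions, but it is a plan: its two hardest steps are named rather than carried out, and the Scott's-lemma step as literally written would fail. For this paper that is no defect, since the statement is rightly treated as a citation; as a standalone proof attempt it is incomplete.
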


\subsection{Relational complexity}
We will define the relational complexity of a permutation group below, but first, we mention the connection to homogeneous structures. Let $\mathbf{X} = (X; R_1, \ldots, R_m)$ be a relational structure; that is, for each $i$, there is a positive integer $n_i$ for which $R_i$ is an $n_i$-ary relation on $X$. Now, every $Y\subseteq X$ gives rise to an induced substructure $\mathbf{Y} = (Y; R^Y_1, \ldots, R^Y_m)$ where each $R^Y_i$ is the restriction of $R_i$ to $Y$, and the structure $\mathbf{X}$ is called \textdef{(ultra)homogeneous} if every isomorphism between induced finite substructures can be extended to an automorphism of $\mathbf{X}$. 

Consider the Petersen graph as a structure $(V; E)$ with $V$ being the set of vertices and $E$ the edge relation. It is not hard to see that $(V; E)$ is \emph{not} homogeneous since there exist independent triples of vertices that have a common neighbor as well as independent triples that do not have a common neighbor. However, if we define a ternary relation $N$ on $V$ such that $(v_1,v_2,v_3)\in N$ if and only if $v_1$, $v_2$, and $v_3$ have a common neighbor, then it can be shown that the expanded structure $(V; E, N)$ is indeed homogeneous.
Thus, although the Petersen graph is not homogeneous in the language of graphs, it becomes homogeneous after adding in an appropriate ternary relation. Moreover, the relation $N$ is definable (in a first-order way, without parameters) from $E$, so $(V; E, N)$ is model-theoretically equivalent to $(V; E)$.

Returning to the general setting, the \textdef{relational complexity} of $\mathbf{X}$ is defined to be the least $k$ (if one exists) such that $\mathbf{X}$ is equivalent to a homogeneous structure all of whose relations are $k$-ary.  Note that if $\mathbf{X}$ is \emph{finite} and if $\widetilde{\mathbf{X}}$ is the expansion of $\mathbf{X}$ obtained by adding in, for all $k\le |X|$, the $k$-ary relations corresponding to the orbits of $\aut(\mathbf{X})$ on $X^k$, then $\widetilde{\mathbf{X}}$ is homogeneous and equivalent to $\mathbf{X}$. In fact, $k\le |X|-1$ will do. As such, the relational complexity of a finite relational structure is always defined and is at most $|X|-1$.  The conclusion of the previous paragraph was that the relational complexity of the Petersen graph is $3$.

We now change our focus from $\mathbf{X}$ to $\aut(\mathbf{X})$ and translate the study of relational complexity into the language of permutation groups. We begin with a preliminary definition.

\begin{definition}
Let $(X,G)$ be a permutation group.
\begin{enumerate}
\item Tuples $\bar{x},\bar{y} \in X^n$ are  \textdef{equivalent} if they are $G$-conjugate, i.e. if they are in the same $n$-type.
\item Tuples $\bar{x},\bar{y} \in X^n$ are  \textdef{$k$-equivalent} ($k\le n$) if every $k$-subtuple from $\bar{x}$ is $G$-conjugate to the corresponding $k$-subtuple from $\bar{y}$.
\item We say that \textdef{$k$-types determine $n$-types} ($k\le n$) if for every $\bar{x},\bar{y} \in X^n$, $k$-equivalence of $\bar{x}$ and $\bar{y}$ implies equivalence of  $\bar{x}$ and $\bar{y}$. 
\end{enumerate}
\end{definition}

\begin{definition}
The \textdef{relational complexity} of a permutation group is the smallest $k$ for which $k$-types determine $n$-types for all $n\ge k$.
\end{definition}

\begin{example}
Every nontrivial permutation group has relational complexity at least $2$. Indeed, assume that $(X,G)$ is nontrivial, and let $x\neq y\in X$ be $G$-conjugate. Then the pairs $(x,x)$ and $(x,y)$ are $1$-equivalent but not $2$-equivalent.
\end{example}

\begin{remark}\label{rem.DistinctEntries}
With the previous example in mind, it is not hard to see that the relational complexity of a \emph{nontrivial} permutation group is the smallest $k\ge2$ for which $k$-types determine $n$-types \emph{of tuples with pairwise distinct entries} for all $n\ge k$. Indeed, if one assumes $k$-equivalence for $k\ge2$, then repeated entries in the first tuple must match up with conjugate repeated entries in the second. Certainly the repetitions may then be ignored. 
\end{remark}

\begin{example}\label{exam.Sn}
The natural action of $S_n$ on $\{1,\ldots,n\}$ is binary, i.e. of relational complexity $2$. In light of the previous remark, this is a simple consequence of $S_n$ being $n$-transitive. In fact, one finds that the only $k$-transitive permutation group with relational complexity less than $k+1$ is $S_n$ acting naturally on $\{1,\ldots,n\}$. As such, the natural action of $A_n$ on $\{1,\ldots,n\}$ has relational complexity $n-1$. 
\end{example}

See \cite{ChG13}, \cite{ChG00}, and  \cite{CMS96} for further background on relational complexity (also known as \textdef{arity}).

\section{Regular nonabelian type}
We now begin our proof of Theorem~\ref{thm.A} by moving through the various socle types laid out in Fact~\ref{fact.ONS}.
We start with the case of a regular nonabelian socle. Our goal is the following.

\begin{proposition}\label{prop.RegNonabelian}
Every finite primitive group of regular nonabelian type has relational complexity at least $3$.
\end{proposition}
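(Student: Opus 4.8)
The plan is to keep the regular action in the foreground and manufacture, from a commutator in $M$, two $4$-tuples that are $2$-equivalent but not equivalent. First I would fix a base point and identify $X$ with $M := \soc(G)$, so that $M$ acts by right multiplication and the stabilizer $H := G_{\{1\}}$ complements $M$, giving $G = M \rtimes H$. Since $G$ is primitive with regular socle, $C_G(M) = 1$, so $H$ embeds in $\aut(M)$, meets $\inn(M)$ trivially, and acts on $X = M$ by automorphisms. Under this identification the $2$-type of a pair is governed by a single ``difference'': $(x,y) \conjunder{G} (x',y')$ precisely when $yx^{-1}$ and $y'x'^{-1}$ lie in a common $H$-orbit of $M$. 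This recasts the whole problem as one about $H$-orbits, and it already shows why fusion by $H$ is essential, since the bare regular representation (where $H=1$) is itself binary.

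Next I would exploit non-abelianness. Choosing $a,b \in M$ with $ab \neq ba$, I compare $\bar{x} = (1,a,b,ab)$ with $\bar{y} = (1,a,b,ba)$. These agree in their first three entries, so any $g$ carrying $\bar{x}$ to $\bar{y}$ fixes $1,a,b$; such a $g$ lies in $H$ and acts as an automorphism fixing $a$ and $b$, hence fixes $ab$. As $ab \neq ba$, no such $g$ exists, so $\bar{x} \not\conjunder{G} \bar{y}$ automatically. Thus the pair witnesses relational complexity at least $3$ the moment it is $2$-equivalent, and because only the last entry moves, $2$-equivalence reduces to the three conditions $ab \conjunder{H} ba$, $\ aba^{-1} \conjunder{H} b$, and $\ bab^{-1} \conjunder{H} a$. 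In each case the two elements are already conjugate in $M$ (by $a^{\pm 1}$ or $b^{\pm 1}$), so what must be secured is that this \emph{inner} conjugacy is realized inside the complement $H$.

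This is exactly the main obstacle, and it is where primitivity has to be used. Because $H \cap \inn(M) = 1$, the automorphisms in $H$ carry no inner part, so a naive choice of $a,b$ fails all three conditions: conjugation within a single simple factor is invisible to $H$. The structure I would bring to bear is that a primitive group of regular nonabelian type is a twisted wreath product, with $M = T^k$ for $k \ge 2$ and $H$ acting transitively on the $k$ simple factors. The strategy is to place $a$ and $b$ across a controlled set of factors so that the required inner conjugations are instead realized by the factor-permuting elements of $H$, the transitivity on factors supplying the $H$-moves that $\inn(M)$ cannot. The delicate step—the one I expect to absorb most of the work—is that the elements of $H$ permute the factors only up to inner adjustments (the ``twisting''), and one must arrange the supports of $a$ and $b$ so that all three conjugacies hold \emph{simultaneously} for a single pair with $ab \neq ba$, overcoming the asymmetry between the roles of $a$ and $b$; this simultaneous realization is the crux on which the argument turns.
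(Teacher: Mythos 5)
Your reduction is correct as far as it goes: the identification of $2$-types of pairs with $H$-orbits of ``differences'' is precisely the paper's Lemma~\ref{lem.RegNonabelian.TwoEquiv}, your tuples $(1,a,b,ab)$ and $(1,a,b,ba)$ can indeed never be $G$-conjugate when $ab\neq ba$, and their $2$-equivalence does reduce to the three conditions $ab\conjunder{H}ba$, $aba^{-1}\conjunder{H}b$, $bab^{-1}\conjunder{H}a$. But the proof stops exactly where the real work begins: you never exhibit a pair $a,b$ with $ab\neq ba$ satisfying these conditions, and you say yourself that this ``simultaneous realization is the crux.'' As written, this is a plan, not a proof. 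Moreover the plan is steered by a false claim: it is not true that ``conjugation within a single simple factor is invisible to $H$.'' The fact that the image of $H$ in $\aut(M)$ meets $\inn(M)$ trivially does not prevent an element of $N_H(T_1)$ from \emph{restricting} to an inner automorphism of $T_1$; such an element fails to be inner on $M$ only because of what it does to the other factors. Indeed, \cite[Theorem~4.7B(iii)]{DiMo96} --- the very fact the paper invokes --- puts a composition factor isomorphic to $T_1$ inside $N_H(T_1)/C_H(T_1)\le\aut(T_1)$, and since $\aut(T_1)/\inn(T_1)$ is solvable (Schreier's conjecture, via the classification), this forces $\inn(T_1)\le N_H(T_1)/C_H(T_1)$.

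Ironically, the negation of your claim is exactly what would complete your argument, and by the opposite of your stated strategy: take $a,b$ supported in the \emph{single} factor $T_1$, with non-commuting components $a_1,b_1\in T_1$. Then each of your three conditions is witnessed by an inner conjugation inside $T_1$ (by $a_1$ for the first two, by $b_1$ for the third: $(a_1b_1)^{a_1}=b_1a_1$, $(a_1b_1a_1^{-1})^{a_1}=b_1$, $(b_1a_1b_1^{-1})^{b_1}=a_1$), and each such inner automorphism of $T_1$ is the restriction of some element of $N_H(T_1)$, which carries $T_1$-supported points of $X$ to $T_1$-supported points via that restriction. There is no tension among the three conditions because $2$-equivalence permits a different group element for each pair. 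Your cross-factor scheme, by contrast, is genuinely delicate because of the twisting, and you give no indication of how to carry it out. Note finally that the repaired argument is quite different from the paper's, which argues by contradiction from the binary hypothesis: the paper uses commuting, $H$-connected tuples with one entry in \emph{each} factor, Lemmas~\ref{lem.RegNonabelian.Main} and~\ref{lem.RegNonabelianConnected} (both of which consume the binary hypothesis) to produce a single $h\in H$ inverting all entries at once, the Feit--Thompson theorem to find an involution in $N_H(T_1)/C_H(T_1)$, and the faithfulness of the $H$-action on $\{T_1,\dots,T_k\}$ to force $h=1$; in particular, the paper leans on Feit--Thompson where your (repaired) route leans on Schreier.
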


We adopt the following setup for the remainder of the section; most of the necessary information for this type can be found in \cite[Theorem~4.7B]{DiMo96}.

\begin{setup}
Assume that $(X,G)$ is a finite primitive group with a regular, nonabelian socle $M$. Since $M$ acts regularly on $X$, we identify $X$ with $M$, and setting $H:= G_1$, we have $G=M\rtimes H$ with $M$ acting on $X$ by right translation and $H$ acting by conjugation. Furthermore, $M=T_1\times \cdots \times T_k$ with $k\ge 6$ and each $T_i$ isomorphic to a fixed nonabelian simple group $T$. (In this case, $G$ is isomorphic to a so-called twisted wreath product $T \operatorname{twr}_\varphi H$.)
\end{setup}

If $a \in X$, $m\in M$, and $h\in H$, we write the action of $mh$ on $a$ as $a\cdot mh:=(am)^h$ so as to avoid confusion with the product $amh$ in $G$. Note that $H$ acts on $X$ as  automorphisms, so in particular, $(a^{-1})\cdot h = (a\cdot h)^{-1}$. We first work for an analog of \cite[Corollary~1.4]{ChG13}; this will be Lemma~\ref{lem.RegNonabelian.Main}. 

\begin{lemma}\label{lem.RegNonabelian.TwoEquiv}
If $(a_1,a_2), (b_1,b_2)\in X^2$, then we have that $(a_1,a_2)\conjunder{G}(b_1,b_2)$ if and only if $a_1a_2^{-1}\conjunder{H}b_1b_2^{-1}$. 
\end{lemma}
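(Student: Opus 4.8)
The plan is to exploit the fact that right translation by an element of $M$ preserves the ``ratio'' $a_1a_2^{-1}$, so that the $M$-part of any $g\in G$ carrying $(a_1,a_2)$ to $(b_1,b_2)$ contributes nothing to the relation between the differences, while the $H$-part acts exactly as the $H$-conjugation we want. Throughout I would use only that $H$ acts on $X=M$ as automorphisms, i.e. $(xy)^h=x^h y^h$ and $(x^{-1})^h=(x^h)^{-1}$, together with the convention $a\cdot mh=(am)^h$.

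For the forward implication, suppose $(a_1,a_2)\conjunder{G}(b_1,b_2)$, witnessed by some $g=mh$ with $m\in M$ and $h\in H$, so that $(a_im)^h=b_i$ for $i=1,2$. I would then compute $b_1b_2^{-1}=(a_1m)^h\bigl((a_2m)^h\bigr)^{-1}=\bigl((a_1m)(a_2m)^{-1}\bigr)^h=(a_1a_2^{-1})^h$, the point being that the two occurrences of $m$ cancel before the automorphism $h$ is applied. Since the action of $h$ on the point $a_1a_2^{-1}\in X$ is precisely $(a_1a_2^{-1})^h$, this exhibits $a_1a_2^{-1}\conjunder{H}b_1b_2^{-1}$.

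For the converse, suppose $a_1a_2^{-1}\conjunder{H}b_1b_2^{-1}$, say $(a_1a_2^{-1})^h=b_1b_2^{-1}$ for some $h\in H$. I would recover the needed $M$-part by forcing the first coordinate to match: set $m:=a_1^{-1}b_1^{h^{-1}}$, so that $(a_1m)^h=b_1$ holds by construction. The crux is then to verify that this single $m$ also sends the second coordinate correctly. Expanding, $(a_2m)^h=(a_2a_1^{-1}b_1^{h^{-1}})^h=(a_2a_1^{-1})^h b_1$, and since $(a_2a_1^{-1})^h=\bigl((a_1a_2^{-1})^h\bigr)^{-1}=(b_1b_2^{-1})^{-1}=b_2b_1^{-1}$ by hypothesis, this collapses to $b_2$. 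Hence $g=mh$ carries $(a_1,a_2)$ to $(b_1,b_2)$, giving $(a_1,a_2)\conjunder{G}(b_1,b_2)$.

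I do not expect a serious obstacle here: the argument is a direct computation, and the only thing requiring care is the bookkeeping forced by the right action and the conjugation action of $H$ — in particular, tracking inverses and confirming that the $m$ extracted from the first coordinate is exactly the one dictated, via the hypothesis on the differences, by the second coordinate. The conceptual content is simply that, once one passes to the difference $a_1a_2^{-1}$, the regular $M$-direction is divided out and only the $H$-action survives.
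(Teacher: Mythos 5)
Your proof is correct and follows essentially the same route as the paper: the forward direction is the identical cancellation computation, and your converse builds the same kind of witness $g=mh$, merely anchored at the first coordinate (the paper takes $g=a_2^{-1}hb_2$, i.e. $m=a_2^{-1}b_2^{h^{-1}}$, anchoring at the second). The difference is purely cosmetic.
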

\begin{proof}
If $(a_1,a_2)\cdot mh = (b_1,b_2)$ for some $m\in M$ and $h\in H$, then \[(a_1a_2^{-1})\cdot  h = [(a_1m)(a_2m)^{-1}]\cdot h = (a_1m)^h((a_2m)^{-1})^h = b_1 b_2^{-1}.\] Conversely, assume that $(a_1a_2^{-1})\cdot h = b_1b_2^{-1}$ for some $h\in H$. Then $g:=a_2^{-1}hb_2$ takes $a_1$ to $b_1$, and it is trivial to check that $a_2\cdot g = b_2$.
\end{proof}

\begin{lemma}\label{lem.RegNonabelian.Main}
Assume  $(X,G)$ is binary. If $a\in X$ and $h\in H$ are such that $[a,a\cdot h] = 1$, then there exists an $h'\in H$ swapping $a$ and $a\cdot h$.
\end{lemma}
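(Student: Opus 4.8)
The plan is to obtain the swapping element from a single application of the binary hypothesis, applied to a pair of $3$-tuples anchored at $1\in X=M$ so that the resulting conjugator is forced into $H=G_1$. Write $b:=a\cdot h$; since $H$ acts as automorphisms we have $a\conjunder{H}b$, and the hypothesis $[a,b]=1$ gives $ab=ba\in X$. I would then consider the two tuples
\[
\bar{x}=(1,a,ab)\qquad\text{and}\qquad\bar{y}=(1,b,ab),
\]
and show that they are $2$-equivalent. Binariness would then force $\bar{x}\conjunder{G}\bar{y}$, and any witnessing $g\in G$ fixes the first coordinate $1$, hence lies in $G_1=H$.

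The heart of the argument is the verification that $\bar{x}$ and $\bar{y}$ are $2$-equivalent, which I would carry out via Lemma~\ref{lem.RegNonabelian.TwoEquiv} by checking the three pairs of corresponding entries. The pairs in positions $(1,3)$ agree outright. The pairs in positions $(1,2)$ reduce to $a^{-1}\conjunder{H}b^{-1}$, which holds since $b=a\cdot h$. The only delicate pair is $(a,ab)$ versus $(b,ab)$: here the commutativity $[a,b]=1$ collapses the relevant differences, since $a(ab)^{-1}=ab^{-1}a^{-1}=b^{-1}$ while $b(ab)^{-1}=a^{-1}$, so this condition again reduces to $b^{-1}\conjunder{H}a^{-1}$ and holds. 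Thus all three conditions follow from the single fact $a\conjunder{H}b$, provided one exploits $[a,b]=1$ to simplify the middle difference.

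Having produced $h'\in H$ with $\bar{x}\cdot h'=\bar{y}$, I would finish by reading off the swap. By construction $a\cdot h'=b$, and since $h'$ acts as an automorphism fixing $ab$ we get $(ab)\cdot h'=(a\cdot h')(b\cdot h')=b\,(b\cdot h')=ab$, whence $b\cdot h'=b^{-1}ab=a$, the last equality using $[a,b]=1$ once more. Therefore $h'$ swaps $a$ and $a\cdot h$, as required. The main obstacle is really the choice of the pivot third coordinate: using $ab$ (rather than, say, trying to swap $a$ and $b$ directly with the tuples $(1,a,b)$ and $(1,b,a)$) is exactly what lets commutativity do double duty, first to make every $2$-type match and then to upgrade ``$a\mapsto b$ and $ab$ fixed'' into a genuine transposition; with a naive choice one is instead left needing $ab^{-1}\conjunder{H}ba^{-1}$, which is not available.
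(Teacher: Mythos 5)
Your proof is correct and follows essentially the same route as the paper's: both anchor a $3$-tuple at $1\in X$ so that binariness, Lemma~\ref{lem.RegNonabelian.TwoEquiv}, and the commutativity hypothesis force a conjugator lying in $H=G_1$. The only difference is the choice of third coordinate --- the paper uses $(1,a,(a\cdot h)^{-1})\conjunder{G}(1,a\cdot h,a^{-1})$, so the swap is immediate from the automorphism property, whereas you pivot on the fixed point $ab$ and recover the swap by a short computation afterwards; both work equally well.
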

\begin{proof}
The conclusion of the lemma is precisely that $(a,a\cdot h) \conjunder{H} (a\cdot h,a)$. Since $H$ acts on $X$ as automorphisms, this is equivalent to showing that $(a,a^{-1}\cdot h) \conjunder{H} (a \cdot h,a^{-1})$, which is, of course, equivalent to showing  that $(1,a,a^{-1}\cdot h) \conjunder{G} (1,a \cdot h,a^{-1})$.
Now, the action is binary, so it suffices to show that $(1,a,a^{-1}\cdot h) $ is $2$-equivalent to $(1,a \cdot h,a^{-1})$. Since $h$ and $h^{-1}$ fix $1$, the only nontrivial conjugacy to check is $(a,a^{-1}\cdot h) \conjunder{G} (a \cdot h,a^{-1})$, but this follows from the previous lemma and our assumption that $[a,a\cdot h] = 1$.
\end{proof}

\begin{definition}
A subset of $X$ (or tuple in $X^\ell$) is called \textdef{$H$-connected} if all elements of the subset (or entries of the tuple) are $H$-conjugate.
\end{definition}

By \cite[Theorem~4.7B(ii)]{DiMo96}, $H$ acts transitively on the set $\{T_1,\ldots,T_k\}$. Thus for any $a_1 \in T_1$ there exist $a_2,\ldots,a_k\in X$ such that each $a_i \in T_i$ and $\{a_1,\ldots,a_k\}$ is $H$-connected. Thus, we are guaranteed the existence of (many) $H$-connected subsets of $k$ \emph{commuting} elements of $X$, and every nontrivial  $a_1 \in T_1$ can be extended to at least one such subset. 

\begin{lemma}\label{lem.RegNonabelianConnected}
Assume  $(X,G)$ is binary. If $(a_1,\ldots,a_\ell)$ is an $H$-connected tuple of commuting elements of $X$, then $(a_1,\ldots,a_\ell)\conjunder{G}(a_1^{-1},\ldots,a_\ell^{-1})$. Further, if $a_1$ and $a_1^{-1}$ are $H$-conjugate, then $(a_1,\ldots,a_\ell)\conjunder{H}(a_1^{-1},\ldots,a_\ell^{-1})$.
\end{lemma}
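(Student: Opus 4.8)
The plan is to reduce everything to pairs via the binary hypothesis and then translate pair-conjugacy into the group $M$ using Lemma~\ref{lem.RegNonabelian.TwoEquiv}. Since $(X,G)$ is binary, to prove $(a_1,\ldots,a_\ell)\conjunder{G}(a_1^{-1},\ldots,a_\ell^{-1})$ it suffices to establish $2$-equivalence, i.e.\ $(a_i,a_j)\conjunder{G}(a_i^{-1},a_j^{-1})$ for every pair $i,j$. By Lemma~\ref{lem.RegNonabelian.TwoEquiv} this is equivalent to $a_ia_j^{-1}\conjunder{H}a_i^{-1}a_j$, and here I would use commutativity: since $a_i$ and $a_j$ commute, $a_i^{-1}a_j=(a_ia_j^{-1})^{-1}$, so writing $c:=a_ia_j^{-1}$ the claim collapses to showing $c\conjunder{H}c^{-1}$.

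To obtain $c\conjunder{H}c^{-1}$, I would invoke Lemma~\ref{lem.RegNonabelian.Main}. As the tuple is $H$-connected there is $h\in H$ with $a_i\cdot h=a_j$, and $[a_i,a_i\cdot h]=[a_i,a_j]=1$, so the lemma produces an $h'\in H$ swapping $a_i$ and $a_j$. Because $H$ acts by automorphisms, $c\cdot h'=(a_i\cdot h')(a_j\cdot h')^{-1}=a_ja_i^{-1}=c^{-1}$, which is exactly what is needed. This settles the first assertion.

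For the second assertion the idea is that $H=G_1$ is the stabilizer of $1\in X=M$, so $(a_1,\ldots,a_\ell)\conjunder{H}(a_1^{-1},\ldots,a_\ell^{-1})$ is equivalent to $(1,a_1,\ldots,a_\ell)\conjunder{G}(1,a_1^{-1},\ldots,a_\ell^{-1})$. I would again invoke the binary hypothesis and check $2$-equivalence of these longer tuples. The pairs among the $a$-entries are handled exactly as above; the new pairs involve the coordinate $1$, and by Lemma~\ref{lem.RegNonabelian.TwoEquiv} the conjugacy of $(1,a_i)$ with $(1,a_i^{-1})$ (and of $(a_i,1)$ with $(a_i^{-1},1)$) reduces, after cancelling the identity, to $a_i\conjunder{H}a_i^{-1}$. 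This is where the extra hypothesis enters: we are given $a_1\conjunder{H}a_1^{-1}$, and since the inverse map commutes with the $H$-action and all the $a_i$ are $H$-conjugate, $a_i^{-1}\conjunder{H}a_1^{-1}\conjunder{H}a_1\conjunder{H}a_i$, yielding $a_i\conjunder{H}a_i^{-1}$ for every $i$.

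The only genuine subtlety — the step I expect to be the crux — is the passage from $G$-conjugacy to $H$-conjugacy in the second part: one must recognize that pinning down $H$-conjugacy amounts to adjoining the fixed point $1$ and re-running the binary reduction, and that the hypothesis $a_1\conjunder{H}a_1^{-1}$ must be propagated to all indices via $H$-connectedness. Everything else is a routine combination of the two preceding lemmas with the commutativity of the entries.
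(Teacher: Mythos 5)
Your proof is correct and follows essentially the same route as the paper's: reduce to pairs via the binary hypothesis, handle $(a_i,a_j)\conjunder{G}(a_i^{-1},a_j^{-1})$ by combining Lemma~\ref{lem.RegNonabelian.Main} (swapping element) with Lemma~\ref{lem.RegNonabelian.TwoEquiv} and commutativity, and obtain the $H$-conjugacy statement by adjoining the fixed point $1$ and propagating $a_1\conjunder{H}a_1^{-1}$ to all entries via $H$-connectedness. The paper's proof is the same argument, only stated more tersely.
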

\begin{proof}
Let $1\le i<j\le\ell$. Since $a_i$ and $a_j$ commute and are $H$-conjugate, we apply Lemma~\ref{lem.RegNonabelian.Main} to see that there is an element of $H$ swapping $a_i$ and $a_j$. Further, this implies that $a_ia_j^{-1}$ is $H$-conjugate to $a_ja_i^{-1}=a_i^{-1}a_j$, so we find that $(a_i,a_j)\conjunder{G}(a_i^{-1},a_j^{-1})$ by Lemma~\ref{lem.RegNonabelian.TwoEquiv}. Since the action is binary, we conclude that  $(a_1,\ldots,a_\ell)\conjunder{G}(a_1^{-1},\ldots,a_\ell^{-1})$. 

Now assume that $a_1$ is $H$-conjugate to $a_1^{-1}$. Since $(a_1,\ldots,a_\ell)$ is $H$-connected, we find that $a_i$ is $H$-conjugate to $a_i^{-1}$ for all $1\le i\le\ell$. Thus, for every $i$, we have that $(1,a_i) \conjunder{G} (1,a_i^{-1})$.  We have already seen that $(a_i,a_j)\conjunder{G}(a_i^{-1},a_j^{-1})$ for all  $1\le i<j\le\ell$, so the fact that $(X,G)$ is binary implies that $(1,a_1,\ldots,a_\ell)\conjunder{G}(1,a_1^{-1},\ldots,a_\ell^{-1})$. Of course, this is equivalent to $(a_1,\ldots,a_\ell)\conjunder{H}(a_1^{-1},\ldots,a_\ell^{-1})$.
\end{proof}

The final ingredient for our proof of Proposition~\ref{prop.RegNonabelian} is the following general (and likely well-known) lemma.

\begin{lemma}
If $T$ is a nonabelian simple group with an automorphism $\alpha$ of order $2$, then $\alpha$ inverts, but does not centralize, some element of $T$.
\end{lemma}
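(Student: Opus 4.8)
The plan is to argue by contradiction, supposing that every element of $T$ inverted by $\alpha$ is also fixed by $\alpha$, and deriving that $T$ must be abelian (or $\alpha$ trivial), contrary to hypothesis. This supposition is exactly the negation of the lemma: an element $t$ is ``inverted but not centralized'' precisely when $t^\alpha = t^{-1} \neq t$, so the lemma fails if and only if $\{t : t^\alpha = t^{-1}\} \subseteq \{t : t^\alpha = t\}$. Note that any $t$ with both $t^\alpha = t^{-1}$ and $t^\alpha = t$ satisfies $t^2 = 1$, so under the supposition every inverted element is an involution or the identity.

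The key observation is that inverted elements control commutation among the conjugates of $\alpha$. Form $G := T \rtimes \langle \alpha \rangle$, so that $\alpha$ is an involution and, for $t \in T$, a short computation (using $\alpha = \alpha^{-1}$) gives $\alpha\,\alpha^t = (t^\alpha)^{-1} t$. A direct check shows that $(t^\alpha)^{-1} t$ is always inverted by $\alpha$; hence, by the supposition, it squares to $1$, i.e. $(\alpha\,\alpha^t)^2 = 1$. Since $\alpha$ and $\alpha^t$ are involutions whose product has order at most $2$, they must commute. Thus $\alpha$ commutes with each of its $T$-conjugates, and conjugating this relation by elements of $T$ shows that all of $\{\alpha^t : t \in T\}$ commute pairwise.

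Consequently $E := \langle \alpha^t : t \in T \rangle$ is generated by pairwise-commuting involutions, so $E$ is an elementary abelian $2$-group, and it is normal in $G$ since both $T$ and $\alpha$ permute its generators by conjugation. Now $E \cap T$ is normal in $T$, so by simplicity it is either $1$ or $T$. It cannot be $T$, since $T$ is nonabelian while $E$ is abelian. Hence $E \cap T = 1$; but then $E \cong ET/T$ embeds in $G/T \cong C_2$, forcing $|E| = 2$ and therefore $E = \langle \alpha \rangle$. This means $\alpha^t = \alpha$ for every $t \in T$, i.e. $\alpha$ is central in $G$ and so acts trivially on $T$, contradicting that $\alpha$ has order $2$ as an automorphism.

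I expect the main obstacle to be the translation step in the second paragraph: one must recognize that the hypothesis about inverted elements is really a statement about commuting conjugates of $\alpha$ inside the extension $T \rtimes \langle \alpha \rangle$, and in particular that the elements $(t^\alpha)^{-1} t$ are simultaneously inverted by $\alpha$ and equal to the ``rotations'' $\alpha\,\alpha^t$ of the dihedral groups $\langle \alpha, \alpha^t \rangle$. Once this dictionary is set up, the remainder is a routine application of the simplicity of $T$. (Alternatively, one could split into the cases where $\alpha$ is inner or outer, but the uniform argument above avoids invoking the detailed structure of $\aut(T)$.)
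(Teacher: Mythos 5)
Your proof is correct and follows essentially the same route as the paper's: both work in $T\rtimes\langle\alpha\rangle$, observe that the commutators $(t^\alpha)^{-1}t$ are inverted by $\alpha$, deduce from the contradiction hypothesis that the $T$-conjugates of $\alpha$ commute pairwise, and then use simplicity of $T$ to force $\langle \alpha^T\rangle = \langle\alpha\rangle$, making $\alpha$ central and hence trivial as an automorphism. The only differences are cosmetic (you phrase the hypothesis via ``inverted implies involution'' and the dihedral observation, where the paper writes $[T,\alpha]\subseteq C(\alpha)$ directly).
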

\begin{proof}
We work in $T\rtimes\langle \alpha \rangle$. Consider $[T,\alpha]$, as a set. It is easy to see that $\alpha$ inverts every element of $[T,\alpha]$. Now, towards a contradiction, assume that $[T,\alpha]\subseteq C(\alpha)$. If $t\in T$, then $\alpha$ centralizes $\alpha^t\alpha$, so $\alpha$ commutes with $\alpha^t$. We conclude that $\alpha^T \subseteq C(\alpha)$, and as the set $\alpha^T$ is $T$-normal, we find that $\alpha^T \subseteq C(\alpha^T)$. In particular, $A:=\langle \alpha^T\rangle$ is abelian and $T$-normal. Further, $A$ is certainly nontrivial, so as $T$ is nonabelian and simple, we see that $T\cap A =1$. We conclude that $A=\langle \alpha \rangle$, so $T$ centralizes $\alpha$. Since $\alpha$ is nontrivial, this is a contradiction.
\end{proof}

\begin{proof}[of Proposition~\ref{prop.RegNonabelian}]
Assume that the action is binary. We first claim that $N_{H}(T_1)/C_{H}(T_1)$ contains an involution. Now, \cite[Theorem~4.7B(iii)]{DiMo96} states that $N_{H}(T_1)$ has a composition factor isomorphic to $T_1$, but in fact, the proof shows that such a factor appears in $N_{H}(T_1)/C_{H}(T_1)$. By the Feit-Thompson Theorem, $N_{H}(T_1)/C_{H}(T_1)$ contains an involution.

We may now apply the previous lemma to see that there exists an $a_1\in T_1$ such that $a_1\neq a_1^{-1}$ and $a_1$ is $H$-conjugate to $a_1^{-1}$. Extend $a_1$ to an $H$-connected tuple $(a_1,a_2,\ldots,a_k)$ with each $a_i \in T_i$, and note that $a_i$ commutes with $a_j$ for all $i$ and $j$. By Lemma~\ref{lem.RegNonabelianConnected}, there is some $h\in H$ taking $(a_1,\ldots,a_k)$ to $(a_1^{-1},\ldots,a_k^{-1})$. Now, by \cite[Theorem~4.7B(ii)]{DiMo96}, $H$ acts \emph{faithfully} on $\{T_1,\ldots,T_k\}$. Since $h$ inverts some nontrivial element in each $T_i$, we see that $h$ must normalize each $T_i$. Thus, $h=1$, so $(a_1,\ldots,a_k)=(a_1^{-1},\ldots,a_k^{-1})$. We have a contradiction. 
\end{proof}

\section{Diagonal type}

We now move to groups of diagonal type. We aim to prove the following. 

\begin{proposition}\label{prop.Diagonal}
Every finite primitive group of diagonal type has relational complexity at least $3$.
\end{proposition}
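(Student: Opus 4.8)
The plan is to show that a finite primitive group of diagonal type is never binary by producing, for a suitable $n$, two $n$-tuples that are $2$-equivalent but not $G$-conjugate. First I would fix coordinates as in the diagonal case of Fact~\ref{fact.ONS}: write $M=\soc(G)=T^k$ with $T$ nonabelian simple and $k\ge 2$, take $p_0$ to be the diagonal coset, and identify every other point of $X$ with a normalized tuple $(g_2,\dots,g_k)\in T^{k-1}$ via $D(1,g_2,\dots,g_k)$, where $D$ is the diagonal copy of $T$. A direct computation then shows that $m=(m_1,\dots,m_k)\in M$ acts by $(g_2,\dots,g_k)\mapsto(m_1^{-1}g_2m_2,\dots,m_1^{-1}g_km_k)$, so that $M_{p_0}=D$ acts by simultaneous conjugation, while the top group $G_0\le \operatorname{Out}(T)\times S_k$ acts through diagonal automorphisms of $T$ together with the (twisted) coordinate permutations coming from $S_k$.

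The first real step, in the spirit of Lemma~\ref{lem.RegNonabelian.TwoEquiv}, is to record a clean description of $2$-types. Using transitivity one reduces any pair to one of the form $(p_0,p)$, and the $2$-type of such a pair is governed by the simultaneous $\aut(T)$-class of its coordinates, modulo the maps induced by $S_k$. In the case $k=2$ this is especially transparent: $X$ is identified with $T$ carrying the two-sided action of $T\times T$, the stabilizer $G_{p_0}$ acts as $\aut(T)\rtimes\langle \mathrm{inv}\rangle$ (the inversion $\mathrm{inv}\colon x\mapsto x^{-1}$ being induced by the transposition in $S_2$), and the $2$-type of $(p_0,a)$ is simply the $\aut(T)$-class of $a$ up to inversion. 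Consequently, two triples $(p_0,a,b)$ and $(p_0,a',b')$ are $2$-equivalent exactly when $a$, $b$, and $a^{-1}b$ agree with $a'$, $b'$, and $(a')^{-1}b'$ as $\aut(T)$-classes up to inversion, whereas they are $G$-conjugate exactly when a single element of $\aut(T)\rtimes\langle\mathrm{inv}\rangle$ carries $(a,b)$ to $(a',b')$. Binarity would force the former to imply the latter, so the whole problem collapses to a purely internal question about $T$: do the conjugacy classes of $a$, $b$, and $a^{-1}b$ determine the pair $(a,b)$ up to simultaneous automorphism and inversion? I would then check that the analogue for general $k$ reduces, via the coordinate structure above, to this same question.

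The final and hardest step is to answer this in the negative \emph{uniformly} over all nonabelian simple $T$. Concretely, I would seek $a,b,b'\in T$ with $b\sim b'$ and $a^{-1}b\sim a^{-1}b'$ as $\aut(T)$-classes (up to inversion), yet with $(a,b)$ and $(a,b')$ lying in distinct orbits of the stabilizer of $a$ inside $\aut(T)\rtimes\langle\mathrm{inv}\rangle$; the associated triples are then $2$-equivalent but inequivalent, so $G$ is not binary. I expect this construction to be the main obstacle, and for a structural reason: unlike the regular nonabelian case, here inversion already lives in the point stabilizer (it comes from the socle-coordinate swap, not from an outer involution that one can later force to be trivial), so the contradiction cannot be extracted from a faithful action on the $k$ coordinates and must instead come from the internal arithmetic of $T$. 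My intended route is a counting argument on class-algebra structure constants—showing that for well-chosen classes the number of pairs $(a,b)$ realizing a prescribed triple of classes exceeds the size of a single simultaneous orbit—supported by the Feit--Thompson theorem to guarantee usable involutions in $T$, together with a separate verification that the permutation part of $G_0$ (a transitive subgroup of $S_k$, or $S_2$ when $k=2$) cannot fuse the two triples. Establishing such a construction with no case analysis on the simple group $T$ is where I anticipate the genuine difficulty to lie.
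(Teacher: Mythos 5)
Your proposal has a genuine gap, and you have located it yourself: the entire argument hinges on the final construction --- for \emph{every} nonabelian simple $T$, elements $a,b,b'$ whose class data agree but which lie in distinct simultaneous orbits --- and this is never carried out, only conjectured. The counting route you sketch (class-algebra structure constants exceeding the size of one orbit) would need uniform estimates of the form $N(A,B,C)\approx |A|\,|B|\,|C|/|T|$, which rest on character bounds for simple groups that are themselves dependent on the classification, and which degrade precisely in the small cases where case analysis would reappear; so the approach undercuts your own goal of uniformity. There is also an error upstream. The point stabilizer $G_{p_0}$ is \emph{not} $\aut(T)$ extended by inversion in general; it is only trapped between $\inn T$ and $\aut T\times S_k$. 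For instance $G=T\times T$ (with $k=2$, acting on $T$ by $x\mapsto m_1^{-1}xm_2$) is already primitive of diagonal type, and there $G_{p_0}=\inn T$: no inversion, no outer automorphisms. Consequently $2$-equivalence in $G$ cannot be certified by ``$\aut(T)$-classes up to inversion,'' as your reduction asserts; it must be certified using only what every such $G$ is guaranteed to contain (the socle and $\inn T$), while non-conjugacy must be proved against the largest possible stabilizer. Reformulating your ``internal question'' accordingly makes it strictly harder, and it is not clear it even has a uniformly positive answer.

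The idea you are missing is that binarity is a hypothesis about tuples of \emph{every} length, and the paper exploits exactly this to avoid your internal question altogether. First, a three-tuple argument using only the socle and $\inn T\le H$ (Lemma~\ref{lem.DiagLemma}) shows that binarity forces the existence of elements of $N_H(T_1)$ swapping $[t,1,\ldots,1]$ and $[t^s,1,\ldots,1]$ for every inner $s$. Binarity is then applied a second time, now to the long tuple $(\bfone,x_1,\ldots,x_n)$ whose entries enumerate the \emph{entire} $T_1$-class of $r_1$, where $r$ is a product of two noncommuting involutions (Feit--Thompson supplies the involutions), so that $r\neq r^{-1}$ but $r\conjunder{T}r^{-1}$. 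All the required pairwise conjugacies come from the swapping lemma together with socle elements, so binarity produces a \emph{single} $h\in N_H(T_1)$ inverting every $x_i$ simultaneously. But an automorphism $\hat h$ of $T$ that inverts a full conjugacy class commutes with $\inn T$ on that class, so $[\inn T,\hat h]$ centralizes $\langle r^T\rangle=T$; hence $\hat h$ is a central automorphism, hence trivial, contradicting $r\neq r^{-1}$. This gluing of pairwise swaps into a global inversion is what the binary hypothesis buys for free, and it is precisely what cannot be extracted from triples alone; it eliminates all counting and all case analysis on $T$. As it stands, your argument does not establish the proposition.
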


Information on groups of diagonal type can be found in \cite[Section~4.5]{DiMo96}. We fix the following setup.

\begin{setup}
Assume that $(X,G)$ is a finite primitive group of diagonal type. For some integer $k\ge 2$, the socle of $G$ is $M:=T_1\times \cdots \times T_k$ where each $T_i$ is isomorphic to a fixed nonabelian simple group $T$. Fixing isomorphisms of each $T_i$ with $T$, we identify $X$ with the set $T_1\times \cdots \times T_k$ modulo the equivalence relation given by the orbits of $T$ acting diagonally on $X$ by left multiplication (by the inverse); $M$ acts coordinatewise by right multiplication. An arbitrary element of $X$ is written $[a_1,\ldots,a_k]$ with each $a_i\in T_i$ where $[a_1,\ldots,a_k] := \{(t^{-1}a_1,\ldots,t^{-1}a_k): t\in T\}$. Set $\bfone:=[1,\ldots,1]$ and $H:= G_\bfone$. Then $H$ acts on $X$ as a subgroup of $\aut T \times S_k$ where $\aut T$ acts diagonally and $S_k$ permutes the components. Further, $H$ contains $\inn T$.
\end{setup}

Our approach here will be similar to that for the regular nonabelian type in that we will again find an element of the point stabilizer $H$ that simultaneously ``inverts'' a large tuple from $X$. However, in this case, the entries of the ``inverted'' tuple will all come from $T_1$; where as, in the regular nonabelian case, different entries came from different $T_i$. As before, we first derive an analog of  \cite[Corollary~1.4]{ChG13}.

\begin{lemma}\label{lem.DiagLemma}
Assume  $(X,G)$ is binary. For any nontrivial $t\in T_1$ and $s\in \inn T$ there is an $h \in N_{H}(T_1)$ swapping $[t,1,\ldots,1]$ and $[t^s,1,\ldots,1]$.
\end{lemma}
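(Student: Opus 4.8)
The plan is to mimic the structure of the regular nonabelian argument: reduce the claim about swapping two points to a question about $2$-equivalence of a triple anchored at $\bfone$, and then verify that $2$-equivalence by hand using the binary hypothesis. Concretely, I want to show $([t,1,\ldots,1],[t^s,1,\ldots,1])\conjunder{H}([t^s,1,\ldots,1],[t,1,\ldots,1])$. Since $H$ fixes $\bfone$, this is equivalent to the $G$-conjugacy
\[
(\bfone,[t,1,\ldots,1],[t^s,1,\ldots,1])\conjunder{G}(\bfone,[t^s,1,\ldots,1],[t,1,\ldots,1]).
\]
By the binary hypothesis it suffices to check $2$-equivalence, and the only nontrivial pair to verify is
\[
([t,1,\ldots,1],[t^s,1,\ldots,1])\conjunder{G}([t^s,1,\ldots,1],[t,1,\ldots,1]).
\]

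First I would record the analog of Lemma~\ref{lem.RegNonabelian.TwoEquiv} for diagonal type: a clean description of when two pairs in $X^2$ are $G$-conjugate. In diagonal type one expects that a pair $([a_1,\ldots,a_k],[b_1,\ldots,b_k])$ is determined up to $G$-conjugacy by the $H$-class of the ``difference'' element, something like $[a_1 b_1^{-1},\ldots,a_k b_k^{-1}]$ (using that $M$ acts by right multiplication and the left-$T$-quotient lets us normalize one coordinate). For the two pairs above, the relevant differences are $[t(t^s)^{-1},1,\ldots,1]$ and $[t^s t^{-1},1,\ldots,1]$; since $s\in\inn T$ centralizes nothing in general but conjugation is by an inner automorphism, $t^s t^{-1}$ and $t (t^s)^{-1} = (t^s t^{-1})^{-1}\cdot(\text{conjugate})$ should land in the same $H$-orbit precisely because $\inn T \le H$ and $H$ contains elements acting as the needed automorphism. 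This is where I would lean on $H\supseteq\inn T$ to manufacture the swap.

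The main obstacle I anticipate is bookkeeping in the quotient $X = (T_1\times\cdots\times T_k)/T$: representatives are only defined up to simultaneous left multiplication by $T$, so every identity must be checked to be well-defined on classes, and the ``difference'' map must be shown both well-defined and $G$-equivariant in the right way. The key computational step will be exhibiting the explicit $h\in N_H(T_1)$ (built from $\inn T$ acting diagonally, possibly composed with an $S_k$-element fixing the first coordinate) that realizes the conjugacy $[t,1,\ldots,1]\leftrightarrow[t^s,1,\ldots,1]$, and confirming it lies in $N_H(T_1)$ rather than merely in $H$. I would carry this out by first proving the pair-conjugacy lemma, then reducing the swap to $2$-equivalence as above, then invoking binarity, and finally assembling $h$ from the inner automorphisms guaranteed by the containment $\inn T\le H$, checking at the end that the resulting element normalizes $T_1$.
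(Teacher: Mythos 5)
Your opening reduction---anchoring the swap at $\bfone$ so that any witness lies in $H$, then invoking binarity---matches the paper's first step, but you apply the binary hypothesis to the \emph{unmodified} triples, and that is exactly where the argument breaks. Doing so leaves you to verify the pair conjugacy $([t,1,\ldots,1],[t^s,1,\ldots,1]) \conjunder{G} ([t^s,1,\ldots,1],[t,1,\ldots,1])$ by purely structural means (your difference lemma plus $\inn T\le H$), and this cannot be done. Your difference lemma is essentially correct once stated properly (the difference tuple $(a_1b_1^{-1},\ldots,a_kb_k^{-1})$ is only well defined up to simultaneous left \emph{and} right diagonal multiplication by $T$, so it lives in a double coset space, not in $X$), but tracing the swap through it, what you need is an element of $H$ carrying $w:=t(t^s)^{-1}=t\,t^{-s}$ to $w^{-1}=t^st^{-1}$ up to $T$-conjugacy. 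Inner automorphisms preserve conjugacy classes, so $\inn T$ supplies such an element only when $w$ is a \emph{real} element of $T$, i.e.\ conjugate to its own inverse---and the elements $t\,t^{-s}$ are precisely commutators $[t^{-1},b]$, which need not be real. Concretely, take $G=T\times T$ acting on $X=T$ (diagonal type with $k=2$), so that $H=\inn T$ exactly and there is no $S_k$-part or outer part to call on; for $T=\mathrm{PSL}_2(7)$ one can choose $t$ and $s$ with $t\,t^{-s}$ of order $7$, and such elements are not conjugate to their inverses. For that group the pair conjugacy you want to verify is simply \emph{false}. This is consistent, since that group is not binary and the lemma is vacuous for it; but your verification step does not use binarity at all, so it is attempting to prove a false statement, and no amount of bookkeeping in the quotient will rescue it.

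The missing idea is the paper's inversion trick, which exists precisely to sidestep this reality obstruction. Instead of your triples, the paper proves
\[(\bfone,[t,1,\ldots,1],[t^{-s},1,\ldots,1]) \conjunder{G} (\bfone,[t^s,1,\ldots,1],[t^{-1},1,\ldots,1]),\]
where now the one nontrivial $2$-equivalence check is \emph{not} a swap: it is witnessed by an explicit element of $M$, namely right multiplication by $(t^s,t,\ldots,t)$, which sends $[t,1,\ldots,1]\mapsto[t^s,1,\ldots,1]$ and $[t^{-s},1,\ldots,1]\mapsto[t^{-1},1,\ldots,1]$ with no hypothesis on $t$ whatsoever (the pairs involving $\bfone$ are handled by $\inn T\le H$, as in your sketch). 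Binarity then produces a witness $g$, which fixes $\bfone$ and hence lies in $H\le \aut T\times S_k$; since such elements act on the points $[a,1,\ldots,1]$ compatibly with the inversion $[a,1,\ldots,1]\mapsto[a^{-1},1,\ldots,1]$, the conjugacy $[t^{-s},1,\ldots,1]\mapsto[t^{-1},1,\ldots,1]$ forces $g$ to send $[t^s,1,\ldots,1]$ to $[t,1,\ldots,1]$, so the same $g$ is the desired swap. In short: your plan uses binarity only to pass from $2$-equivalence to equivalence, but for this lemma the $2$-equivalence of the naive triples is itself unprovable (indeed false in non-binary diagonal groups); the paper's modified triple is the device that makes the $2$-equivalence checkable by an honest element of $M$.
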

\begin{proof}
We aim to show that \[(\bfone,[t,1,\ldots,1],[t^{s},1,\ldots,1]) \conjunder{G} (\bfone,[t^s,1,\ldots,1],[t,1,\ldots,1])\] 
as any element taking the first tuple to the second must necessarily lie in $H$, hence in $N_{H}(T_1)$ since $H \le \aut T \times S_k$.
Further, note that this equivalent to showing that \[(\bfone,[t,1,\ldots,1],[t^{-s},1,\ldots,1]) \conjunder{G} (\bfone,[t^s,1,\ldots,1],[t^{-1},1,\ldots,1]).\] 
Now, we are assuming that the action is binary, so as $H$ contains $\inn T$, it is enough to note that the map \[[a_1,a_2,\ldots,a_k] \mapsto [a_1t^s,a_2t,\ldots,a_kt]\] is an element of $G$, in fact $M$, that takes the pair $([t,1,\ldots,1],[t^{-s},1,\ldots,1])$ to $([t^s,1,\ldots,1],[t^{-1},1,\ldots,1])$.
\end{proof}

\begin{proof}[of Proposition~\ref{prop.Diagonal}]
Assume that $(X,G)$ is binary. Let $r\in T$ be the product of two noncommuting involutions from $T$. Then, $r$ is not an involution, and $r$ is $T$-conjugate to $r^{-1}$.  

    Let $r_1$ be the element of $T_1$ corresponding to $r$ under our fixed isomorphism. Enumerate the conjugacy class of $r_1$ in $T_1$ as $r_1,\ldots,r_n$, and set $x_i := [r_i,1,\ldots,1]$. By the previous lemma, there exists an $h_{i,j} \in N_{H}(T_1)$ swapping $x_i$ and $x_j$. Further, since each $r_i$ is $T_1$-conjugate to $r_i^{-1}$,  Lemma~\ref{lem.DiagLemma} also shows that there exists a $k_i \in N_{H}(T_1)$ swapping $x_i$ and $x_i^{-1}$, where $x_i^{-1}:=[r_i^{-1},1,\ldots,1]$. We now claim that there is an $h\in N_{H}(T_1)$ that simultaneously ``inverts'' every element of $\{x_1,\ldots,x_n\}$, i.e. that $(x_1,\ldots,x_n)\conjunder{N_{H}(T_1)} (x_1^{-1},\ldots,x_n^{-1}).$ As in the proof of Lemma~\ref{lem.DiagLemma}, this is equivalent to showing $(\bfone,x_1,\ldots,x_n) \conjunder{G} (\bfone,x_1^{-1},\ldots,x_n^{-1}).$  Since the action is assumed to be binary, it only remains to verify that $(x_i,x_j) \conjunder{G} (x_i^{-1},x_j^{-1})$, and setting $m:=(r_i^{-1},r_j,\ldots,r_j)\in M$, it is easily checked that $(x_i,x_j)h_{ij}m= (x_i^{-1},x_j^{-1})$. 

Let $\hat{h} \in \aut(T)$ be the automorphism of $T$ corresponding to $h$, and let $\mathcal{C}:=r^T$. Since $h$ simultaneously inverts the $x_i$, it is not hard to see that 
$\hat{h}$ must invert every element of $\mathcal{C}$, so when restricted to $\mathcal{C}$, $\hat{h}$ commutes with $\inn(T)$. Thus, $[\inn(T),\hat{h}]$ centralizes $\mathcal{C}$, but then $[\inn(T),\hat{h}]$ centralizes the subgroup generated by $\mathcal{C}$. As $T$ is simple, we find that $[\inn(T),\hat{h}]$ centralizes $T$, i.e. $\hat{h}$ is a central automorphism. One easily computes that for all $s,t \in T$ \[(s^{\hat{h}})^{t^{-1}t^{\hat{h}}} = ((tst^{-1})^{\hat{h}})^{t^{\hat{h}}} =(t^{-1})^{\hat{h}}(tst^{-1})^{\hat{h}}t^{\hat{h}}= (t^{-1}tst^{-1}t)^{\hat{h}} = s^{\hat{h}},\]
so $t^{-1}t^{\hat{h}}\in Z(T)$ for all $t\in T$. Since $T$ is nonabelian and simple, we conclude that $\hat{h}$ acts trivially on $T$, but as $\hat{h}$ does not centralize $r$, we have a contradiction. 
\end{proof}

\section{Product type}

Finally, we address product type.

\begin{proposition}\label{prop.Product}
If $G$ is a finite primitive binary group of product type then $G$ is a subgroup of $H\wr S_m$ in its product action where $H$ is a primitive binary almost simple permutation group that is not $2$-transitive.
\end{proposition}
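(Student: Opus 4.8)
The plan is to analyze a primitive group $G$ of product type via its embedding in $H \wr S_m$ (product action), where Fact~\ref{fact.ONS} tells us $H$ is primitive of almost simple or diagonal type. Since $G$ is binary, I would first want to argue that $H$ must itself be binary, and that the diagonal option for $H$ is ruled out by Proposition~\ref{prop.Diagonal}; what then remains is to exclude the possibility that $H$ is $2$-transitive. So the three things to establish are: (a) $H$ is binary; (b) $H$ is almost simple (not diagonal); (c) $H$ is not $2$-transitive.

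\medskip

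\noindent\textbf{Deducing that $H$ is binary.} In the product action, $X = \Delta^m$ where $(\Delta, H)$ is the primitive ``factor'' group, and $G \le H \wr S_m$ acts by coordinatewise application of $H$ together with permutation of the $m$ coordinates. The natural strategy is to transfer binarity down to a single coordinate. Given tuples $\bar{u}, \bar{v} \in \Delta^n$ that are $2$-equivalent under $H$, I would build tuples $\bar{x}, \bar{y} \in X^n$ that ``encode'' $\bar{u}, \bar{v}$ in the first coordinate while keeping the other coordinates constant (or otherwise rigid), so that the coordinate-permuting part of $G$ is forced to act trivially on any element realizing an equivalence. Concretely, fix a base point and place $\bar{u}$ in the first coordinate and a fixed element in each remaining coordinate; do the same for $\bar{v}$. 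The point is to show $2$-equivalence of $\bar{x}, \bar{y}$ under $G$ (which follows from the $2$-equivalence of $\bar{u}, \bar{v}$ under $H$ together with the fixed coordinates matching), then use binarity of $G$ to get $\bar{x} \conjunder{G} \bar{y}$, and finally project back down to conclude $\bar{u} \conjunder{H} \bar{v}$. The subtlety is ensuring the element of $G$ realizing $\bar{x} \conjunder{G} \bar{y}$ cannot nontrivially permute coordinates, which is where the constancy of the padding coordinates must be chosen carefully (e.g. choosing the padding so that the ``profile'' across coordinates is asymmetric enough to pin down the identity permutation).

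\medskip

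\noindent\textbf{Ruling out diagonal type and $2$-transitivity.} Once $H$ is known to be binary, Proposition~\ref{prop.Diagonal} immediately forbids $H$ from being of diagonal type (diagonal groups have relational complexity at least $3$), so $H$ is almost simple. For the final clause, suppose toward a contradiction that $H$ is $2$-transitive on $\Delta$. A $2$-transitive group has all ordered pairs of distinct points in a single $2$-type, which makes the product action ``too homogeneous'' at the level of pairs and should force a failure of binarity for $G$ via some low-complexity triple in $X = \Delta^m$. The cleanest route is probably to exhibit explicit triples in $X^3$ (or $X^n$ for small $n$) that are $2$-equivalent under $G$ but not equivalent, using the fact that $2$-transitivity collapses the pairwise data so much that the coordinate-permutation structure of $S_m$ becomes undetectable from pairs alone. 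Note the product action requires $m \ge 2$ and $H$ primitive but not regular; if $H$ were $2$-transitive one expects the product action to behave like a large primitive group whose relational complexity exceeds $2$.

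\medskip

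\noindent The main obstacle I anticipate is step (a): pushing binarity of $G$ down to binarity of $H$ cleanly. The difficulty is that $G$ is only a \emph{subgroup} of $H \wr S_m$, so it need not contain the full $H$ acting in a single coordinate, and the coordinate-permuting elements of $G$ can interfere with the padding argument. Controlling the interaction between the socle $M = (\soc H)^m$ and the top group $S_m$—in particular guaranteeing that the constructed witnessing tuples force the realizing element into the ``base group'' $H^m$ and then into a single coordinate—is the delicate part, and likely requires a careful choice of padding elements exploiting primitivity (hence nonregularity and maximality of point stabilizers) of $H$.
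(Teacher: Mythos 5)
Your skeleton (pad to push binarity down to one coordinate, kill diagonal type with Proposition~\ref{prop.Diagonal}, then contradict $2$-transitivity with an explicit low-arity example) matches the paper's, but step (a) as you state it --- proving that $H$ \emph{itself} is binary --- is the wrong target, and the obstacle you flag at the end is fatal to it rather than a technicality. The problem is not only that a realizing element might permute coordinates; it is that you cannot even begin the padding argument: to show your padded tuples $\bar{x},\bar{y}$ are $2$-equivalent under $G$ you need witnesses \emph{in} $G$, whereas the hypothesis that $\bar{u},\bar{v}$ are $2$-equivalent under $H$ only supplies witnesses in $H$, and $G$ need not contain a copy of $H$ acting in one coordinate (nor need those witnesses have anything to do with $G$). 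No choice of padding repairs this, since the failure is in the direction of producing, not constraining, group elements. The paper's fix is to change the group: let $W_1$ be the preimage in $H\wr S_k$ of the stabilizer of the first coordinate and let $G_1:=\pi_1(G\cap W_1)\le H$; one proves that $(Y,G_1)$ is binary, and then relies on Kov\'acs--Liebeck \cite{KoL89} (the discussion after (2.3)), which gives both that $G_1$ is primitive with $\soc(G_1)=\soc(H)$ of the same O'Nan--Scott type and that the wreath decomposition may be taken with component $G_1$, so the proposition's conclusion holds with this $G_1$ in the role of $H$. With that target your padding argument does go through, but note what it still needs: take $\bar{u},\bar{v}$ with pairwise distinct entries (Remark~\ref{rem.DistinctEntries}) so that the single nonconstant column pins any realizing element into $G\cap W_1$, and, when verifying $2$-equivalence, lift a witness from $G_1$ to $G\cap W_1$ and repair what the lift does to the constant padding columns by an element of the socle $N^k\le G$, using transitivity of $N=\soc(H)$ on $Y$.

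Step (c) is also missing its key enabling reduction. From $(Y,G_1)$ binary and $2$-transitive one first concludes $G_1=\sym(Y)$ (Example~\ref{exam.Sn}), hence $N=\soc(G_1)=\alt(Y)$ with $|Y|\ge 5$, so the \emph{socle} is $2$-transitive on $Y$; this is what lets one prove that $G$ ``realizes $P$ on pairs'' (any coordinate permutation occurring in $G$ can be realized on any fixed pair of points of $X$), which together with transitivity of $P$ (Fact~\ref{fact.PTrans}) is exactly what the $2$-equivalence checks require. The paper's counterexample then consists of two $4$-tuples, viewed as $4\times k$ matrices, that are $2$-equivalent but have different multisets of column patterns, hence lie in different orbits even under the full wreath product. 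The choice of four rows is not incidental: for $k=2$, a short count of column partitions shows that two $2$-equivalent triples always have the same column-partition multiset, so the invariant you would need for ``triples in $X^3$'' separates nothing there; quadruples are the right size.
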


Here, in addition to drawing from \cite{DiMo96}, we also utilize \cite{KoL89}. We fix the following setup.

\begin{setup}
Assume that $(X,G)$ is a finite primitive group of product type. Identify $X$ with $Y^k$ (with $k\ge 2$) and $G$ with a subgroup of $W:=H \wr S_k$ in its product action where $H$ is a primitive subgroup of $\sym(Y)$ of almost simple or diagonal type. Setting $N:=\soc(H)$, we have $M:=\soc(G)=N^k$. 

Now, let $\pi$ be the obvious projection from $W$ to $S_k$, and set $P$ to be the image of $G$ under $\pi$. Further, if $W_1$ is the preimage under $\pi$ of the stabilizer of the first coordinate, then $W_1$ factors as $W_1 = H \times (H^{k-1}\wr S_{k-1})$, and we let $\pi_1$ be the projection of $W_1$ onto the first factor $H$. Finally, set $G_1$ to be the image of $(W_1\cap G)$ under $\pi_1$.
\end{setup}

We first highlight two important properties of $P$ and $G_1$ (see the discussion following \cite[(2.3)]{KoL89}).

\begin{fact}\label{fact.PTrans}
The action of $P$ on $\{1,\ldots,k\}$ is transitive; the action of $G_1$ on $Y$ is primitive with $\soc(G_1) = N$. 
\end{fact}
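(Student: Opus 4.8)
The plan is to establish the two assertions of Fact~\ref{fact.PTrans} by unwinding the product-action setup and invoking the general structure of wreath products acting on $Y^k$. For the first claim, that $P$ acts transitively on $\{1,\ldots,k\}$, I would argue as follows. Since $(X,G)$ is primitive, $G$ acts transitively on $X = Y^k$; in particular the socle $M = N^k$ must act nontrivially, and more to the point, the coordinate permutation action of $G$ cannot preserve any proper nonempty subset of $\{1,\ldots,k\}$. Indeed, if $P = \pi(G)$ had a nontrivial invariant partition (or even a proper orbit) on the coordinates, one could pull this back through the product action to produce a proper nontrivial $G$-invariant equivalence relation on $Y^k$ — for instance, declaring two points of $Y^k$ equivalent when their projections agree on a $P$-invariant block of coordinates — contradicting primitivity of $(X,G)$. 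Thus $P$ is transitive on $\{1,\ldots,k\}$. (This is exactly the content flagged in the discussion following \cite[(2.3)]{KoL89}, so I would cite that for the details.)

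For the second claim, concerning $G_1 = \pi_1(W_1 \cap G)$ acting on $Y$, the main work is to show primitivity together with $\soc(G_1) = N$. First I would observe that $G_1$ acts on the first coordinate copy of $Y$, and that since $H$ itself is primitive on $Y$ with $\soc(H) = N$, it suffices to trap $G_1$ between $N$ and $H$, i.e. to show $N \le G_1 \le H$ (as subgroups of $\sym(Y)$). The upper bound $G_1 \le H$ is immediate from the definition of $\pi_1$ as projection onto the first factor $H$ of $W_1 = H \times (H^{k-1} \wr S_{k-1})$. The lower bound is the crux: one must show that the full socle component $N$ of the first coordinate is realized inside $G_1$. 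Here I would use that $M = N^k \le G$ and that $M \le W_1 \cap G$ (since $M$ normalizes each coordinate, it lies in the preimage $W_1$ of the coordinate stabilizer). Projecting $M = N^k$ via $\pi_1$ onto the first factor yields exactly $N$, so $N \le G_1$. Once $N \le G_1 \le H$ is in hand, primitivity of $G_1$ on $Y$ follows because a group lying between a primitive group's socle and the primitive group is itself primitive (the point stabilizer remains maximal), and $\soc(G_1) = N$ follows since $N = \soc(H)$ is the unique minimal normal subgroup structure and is normal in any such intermediate group.

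The step I expect to be the main obstacle is pinning down the lower bound $N \le G_1$ cleanly — specifically, verifying that the projection $\pi_1(M)$ really is all of $N$ and not some proper subgroup, which hinges on $M = N^k$ sitting inside $G$ as a genuine direct product with the first coordinate mapping isomorphically onto $N$ under $\pi_1$. This is where the precise form of the product-action embedding $G \le H \wr S_k$ matters, and where I would lean most heavily on the structural facts assembled in \cite[(2.3)]{KoL89}. Since both assertions are standard consequences of the product-action machinery and are explicitly recorded in the cited reference, I would keep this argument brief, presenting it essentially as a pointer to \cite{KoL89} with the key identifications $M \le W_1 \cap G$ and $\pi_1(M) = N$ made explicit, rather than reproving the general theory from scratch.
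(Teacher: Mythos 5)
First, a point of comparison: the paper does not prove this statement at all --- it is recorded as a Fact with a pointer to the discussion following (2.3) of \cite{KoL89} --- so your proposal can only be measured against the cited literature. Your argument for the first assertion is correct and is the standard one: a proper nonempty $P$-invariant set $\Delta$ of coordinates pulls back to the $G$-invariant equivalence relation ``agree in all coordinates belonging to $\Delta$'' on $Y^k$, which is proper and nontrivial, contradicting primitivity of $(X,G)$. Your identifications $M\le W_1\cap G$ and $\pi_1(M)=N$, hence $N\le G_1\le H$, are also correct, and they do yield $\soc(G_1)=N$: any minimal normal subgroup of $G_1$ meeting $N$ trivially would centralize $N$, and $C_H(N)=1$ since a nontrivial normal subgroup of $H$ must contain a minimal normal subgroup of $H$, which lies in $\soc(H)=N$.

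The genuine gap is the step you rely on for primitivity of $G_1$: the principle ``if $N=\soc(H)\le G_1\le H$ and $H$ is primitive on $Y$, then $G_1$ is primitive on $Y$ (the point stabilizer remains maximal)'' is false. Maximality of $H_y$ in $H$ does not pass to maximality of $G_1\cap H_y$ in $G_1$; the failures are precisely the ``novelties'' in the theory of maximal subgroups of almost simple groups. Concretely, let $H=\mathrm{PGL}(2,7)$ act on the $28$ cosets of a maximal dihedral subgroup $D_{12}$, and take $G_1=N=\mathrm{PSL}(2,7)$: then $N\le G_1\le H$ with $H$ primitive of almost simple type, but the point stabilizer in $N$ is $S_3$, which is not maximal in $\mathrm{PSL}(2,7)$ (it lies inside an $S_4$), so $G_1$ is transitive yet imprimitive. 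Thus the sandwich $N\le G_1\le H$ alone cannot establish the second assertion; primitivity of $G_1$ genuinely requires the primitivity of $G$ on $X=Y^k$, which your second paragraph never uses. The repair runs parallel to your (correct) argument for transitivity of $P$: supposing $\rho$ is a proper nontrivial $G_1$-congruence on $Y$, one uses transitivity of $P$ and the fact that for $g=(h_1,\ldots,h_k)\sigma\in G$ the component $h_i$ conjugates the $i$-th component group $G_i$ onto $G_{i\sigma}$ to transport $\rho$ to well-defined congruences $\rho_i$ for each $G_i$, and then checks that the relation ``$y_i\,\rho_i\,z_i$ for all $i$'' is a proper nontrivial $G$-invariant equivalence relation on $Y^k$, contradicting primitivity of $G$. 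That argument (or simply resting on the citation to \cite{KoL89}, as the paper itself does) closes the gap.
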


We view the elements of $X=Y^k$ as row vectors and $m$-tuples of elements of $X$ as $m\times k$-matrices. The elements of the base group $H^k$ then act ``column-wise'' on the matrices, and the top group permutes the columns. The proof of Proposition~\ref{prop.Product} is essentially the following straightforward lemma modulo one outstanding case that we address below. 

\begin{lemma}[see {\cite[Theorem~1]{CMS96}}]
The relational complexity of $(X,G)$ is at least as big as the relational complexity of $(Y,G_1)$.
\end{lemma}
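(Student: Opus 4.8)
The plan is to show that any failure of $j$-types to determine $n$-types for the smaller group $(Y,G_1)$ can be lifted to a failure for $(X,G)$, which forces the relational complexity of $(X,G)$ to be at least that of $(Y,G_1)$. Concretely, suppose the relational complexity of $(Y,G_1)$ is $c$. Then for some $n \ge c$ there exist tuples $\bar{u}, \bar{v} \in Y^n$ that are $(c-1)$-equivalent under $G_1$ but not $G_1$-conjugate. I would use these as the first-coordinate data of tuples in $X^n = (Y^k)^n$.

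First I would build the lifted tuples. Using Fact~\ref{fact.PTrans}, $G_1$ is the projection $\pi_1(W_1 \cap G)$, so every element of $G_1$ arises from an element of $G$ stabilizing the first coordinate. Given $\bar{u} = (u_1,\ldots,u_n)$ and $\bar{v}=(v_1,\ldots,v_n)$ in $Y^n$, I would form the $n \times k$ matrices $\bar{x}$ and $\bar{y}$ whose first columns are $\bar{u}$ and $\bar{v}$ respectively, and whose remaining entries are chosen identically in both (say, filled in with a fixed base point of $Y$ in every other column, or with any fixed auxiliary tuple common to both). The point is that $\bar{x}$ and $\bar{y}$ agree outside the first column and carry the problematic $G_1$-data in the first column.

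Next I would verify the two required properties. For non-conjugacy in $(X,G)$: if some $g \in G$ sent $\bar{x}$ to $\bar{y}$, then since the two matrices are identical off the first column and the columns are only permuted by $\pi(g) \in P$ (with column-wise action of the base group), I would argue that $g$ must fix the first coordinate, so $g \in W_1 \cap G$; projecting via $\pi_1$ then yields an element of $G_1$ carrying $\bar{u}$ to $\bar{v}$, contradicting non-$G_1$-conjugacy. For $(c-1)$-equivalence in $(X,G)$: given any $(c-1)$-subtuple of rows, I would use the $(c-1)$-equivalence of $\bar{u},\bar{v}$ under $G_1$ to produce a first-coordinate matching element (lifted into $W_1 \cap G$), and combine it with the fact that the other columns agree, so that the corresponding $(c-1)$-subtuples of $\bar{x}$ and $\bar{y}$ are $G$-conjugate. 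Hence $\bar{x}$ and $\bar{y}$ witness that $(c-1)$-types do not determine $n$-types for $(X,G)$, giving relational complexity at least $c$.

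The main obstacle I expect is the non-conjugacy step: one must be careful that the auxiliary columns cannot be exploited by the column-permuting top group $P$ to transport the first-column discrepancy elsewhere. The clean fix is to choose the auxiliary entries so that the first column is distinguished from the others in a $G$-invariant way — for instance, by first passing (via Remark~\ref{rem.DistinctEntries}) to tuples with distinct entries and arranging the filler columns to be constant, so that only the first column varies across rows; then any $g \in G$ taking $\bar{x}$ to $\bar{y}$ must preserve the unique column exhibiting that variation and hence fix the first coordinate. Once this rigidity is secured, the projection argument through $\pi_1$ and the lifting of $G_1$-elements into $W_1 \cap G$ are routine, and the inequality on relational complexities follows directly.
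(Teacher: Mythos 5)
Your proposal is, in substance, the paper's own argument run in the contrapositive direction: the paper fixes $r$ equal to the relational complexity of $(X,G)$ and shows that $r$-types determine $m$-types for $(Y,G_1)$, using exactly your matrices (the problematic tuples in the first column, constant filler columns elsewhere) and exactly your rigidity argument (constant columns map to constant columns, so any $g$ carrying one matrix to the other lies in $W_1\cap G$ and projects via $\pi_1$ to the required element of $G_1$). That part of your write-up is correct, including the appeal to Remark~\ref{rem.DistinctEntries}; note only that the distinct-entries reduction needs $c-1\ge 2$, and the remaining case $c=2$ is automatic since every nontrivial permutation group has relational complexity at least $2$.

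There is, however, a genuine gap in your $(c-1)$-equivalence step. You claim that a lift $g\in W_1\cap G$ of a matching element $g_1\in G_1$, ``combined with the fact that the other columns agree,'' carries a $(c-1)$-subtuple of $\bar{x}$ to the corresponding subtuple of $\bar{y}$. It does not, as stated: $g$ also acts on columns $2,\ldots,k$, permuting them and applying elements of $H$ columnwise, so it sends the constant-$y$ filler columns to constant columns whose values are $H$-translates of $y$, not necessarily $y$ itself; the agreement of the filler columns of $\bar{x}$ and $\bar{y}$ is no defense against this. The repair needs one more ingredient: $G$ contains $\soc(G)=N^k$, and $N=\soc(G_1)$ is transitive on $Y$ (it is the socle of a primitive group), so after applying $g$ you may compose with an element $(1,n_2,\ldots,n_k)\in N^k\le G$ that is trivial in the first coordinate and returns each filler column to the constant value $y$. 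The composite still lies in $W_1\cap G$ and maps the subtuple of $\bar{x}$ to that of $\bar{y}$, as desired. (The paper compresses this into ``we easily see that they are $r$-equivalent under the action of $G$ (in fact $W_1\cap G$),'' but some such use of the socle is unavoidable: nothing guarantees that a lift of $g_1$ can be chosen to fix the filler columns outright.)
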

\begin{proof}
Let $r$ be the relational complexity of $(X,G)$; note that $r\ge 2$. We now consider $(Y,G_1)$ and show that $r$-types determine $m$-types for all $m\ge r$. Indeed, take two  $r$-equivalent tuples $\barc_1,\barc_2 \in Y^m$ viewed as $m\times 1$ matrices. Appealing to Remark~\ref{rem.DistinctEntries}, we also assume that neither $\barc_1$ nor $\barc_2$ have repeated entries. Fix $y\in Y$. Let $\barc$ be the the $m\times 1$ matrix with each entry equal to $y$, and form the $m\times k$ matrices $A=\left(\begin{smallmatrix}\barc_1 & \barc & \barc & \cdots & \barc\end{smallmatrix}\right)$ and $B=\left(\begin{smallmatrix}\barc_2 & \barc & \barc & \cdots & \barc\end{smallmatrix}\right)$. Now, viewing $A$ and $B$ as elements of $X^m$, we easily see that they are $r$-equivalent under the action of $G$ (in fact $W_1\cap G$). Thus, by assumption, there is some $g\in G$ taking $A$ to $B$. Since  the first columns of $A$ and $B$ are the only nonconstant columns, it must be that $g\in W_1\cap G$, so the image of $g$ in $G_1$ takes $\barc_1$ to $\barc_2$. Hence, the $r$-types of $(Y,G_1)$ determine $m$-types for all $m\ge r$, so the relational complexity of $(Y,G_1)$ is at most $r$.
\end{proof}

\begin{proof}[of Proposition~\ref{prop.Product}]
Let $(X,G)$ be as above, and now assume that the action is binary. Thus, by the previous lemma $(Y,G_1)$ is binary. Additionally, $\soc(G_1) = \soc(H) = N$, and since $H$ is a primitive group of almost simple or diagonal type, the same is true of $G_1$. By Proposition~\ref{prop.Diagonal}, the second option is not possible, so $G_1$ is almost simple. It remains to show that $(Y,G_1)$ is not $2$-transitive.

Assume $(Y,G_1)$ is $2$-transitive. Since $(Y,G_1)$ is binary, this implies that $G_1=\sym(Y)$. We now give an explicit example showing that this in turn implies that the $2$-types of $(X,G)$ do not determine the $4$-types. Thus, the action is not binary, which is a contradiction. 

Since $(X,G)$ is of product type, $\soc(G)$, hence $N=\soc(G_1)$, is nonabelian, so  $N = \alt(Y)$ with $|Y| \ge 5$. Thus, for every $\bary_1, \bary_2 \in Y^k = X$ and every $g=\barh\sigma \in G$ ($\barh\in H^k$ and $\sigma \in S_k$), there is a $g'\in G$ such that $(\bary_1, \bary_2) g' = (\bary_1, \bary_2)\sigma$. Indeed, since $G$ contains $N^k$, the $2$-transitivity of $N$ on $Y$ implies that there is an  $\barm \in \alt(Y)^k$ for which $(\bary_1, \bary_2)\barm \barh =  (\bary_1, \bary_2)$. Thus we say that \textdef{$G$ realizes $P$ on pairs from $X$}. 

We now identify $Y$ with $\{1,\ldots,\ell\}$ for some natural number $\ell \ge 5$. Consider the following matrices representing two tuples in $X^4$:
\[ A = \begin{pmatrix} 
1 & 1 & 1 & \cdots & 1 \\
1 & 2 & 2 & \cdots & 2 \\
3 & 3 & 3 & \cdots & 3 \\
3 & 4 & 4 & \cdots & 4
\end{pmatrix}
\text{ and }
B=\begin{pmatrix} 
1 & 1 & 1 & \cdots & 1 \\
1 & 2 & 2 & \cdots & 2 \\
3 & 3 & 3 & \cdots & 3 \\
4 & 3 & 4 & \cdots & 4
\end{pmatrix}\]
We first claim that they are $2$-equivalent, i.e. that any two rows from $A$ are $G$-conjugate to the corresponding two rows in $B$. Clearly we need only check pairs of rows that include the fourth. Since $(Y,N)$ is $2$-transitive, we easily see that first and fourth as well as second and fourth rows of $A$ are simultaneously conjugate (using elements in $N^k$) to the corresponding rows in $B$. Finally, we use our observation above that $G$ realizes $P$ on pairs from $X$ together with the fact that $P$ is transitive on the coordinates to see that $G$ contains an element (only permuting coordinates) that takes the third and fourth rows of $A$ to the corresponding rows of $B$. We conclude that $A$ and $B$ are $2$-equivalent, but they have no chance to be $4$-equivalent since the ``column patterns'' are different. For example, the first column of $A$ contains precisely two distinct entries, so the image of $A$ under an element of $G$ must also contain a column with precisely two distinct entries. 
\end{proof}

\section*{Acknowledgements} 
The author would like to acknowledge the warm hospitality of the Hausdorff Research Institute for Mathematics where the work in this article began in the fall of 2013 during the trimester program on Universality and Homogeneity. The author is also grateful to Gregory Cherlin for the several enlightening and enjoyable conversations about relational complexity while in Bonn. Additionally, the author is thankful to the anonymous referee for a very careful reading of the paper and many helpful suggestions.

\bibliographystyle{alpha}
\bibliography{WisconsBib}
\end{document}